\def\ds{\displaystyle}
\newcommand\A[1]{\mathrm{A}_{#1}}
\newcommand\Path[1][]{\mathrm{Path}_{#1}}
\newcommand\Walk[1][]{\mathrm{Walk}_{#1}}
\newcommand\vertex[3]{\node (#1) at #2[circle, draw, fill=lightgray!100, inner sep=0pt, minimum width=4pt, label=#3] {};}
\newcommand\arc[4][]{\draw[line width=2, white, shorten >=.1cm, shorten <=.1cm](#2)--(#3);\draw[->, decoration={markings,mark=at position 1 with {\arrow[scale=2]{>}}}, postaction={decorate}, shorten >=0.4pt, #1] (#2) -- node[above]{#4} (#3);}
\newcommand\edge[3][]{\draw[#1] (#2) to node[above]{} (#3);}
\def\sgn{\mathrm{sgn}}
\def\R{\mathbb{R}}
\def\Q{\mathbb{Q}}
\def\N{\mathbb{N}}
\def\Z{\mathbb{Z}}
\def\det{\mathrm{det}}
\def\Pf{\mathrm{Pf}}
\def\LE{\mathrm{LE}}
\def\Ed{\mathrm{Ed}}
\def\V{\mathrm{V}}
\def\wt{\mathrm{wt}}
\renewcommand{\subset}{\subseteq}
\def\emp{\varnothing}
\newcommand{\transpose}[1]{#1^{\top}}
\newcommand{\set}[1]{\{1,\dots,#1\}}
\newcommand{\symgroup}[1]{\mathrm{S}_{#1}}
\newcommand{\bd}{\partial}
\newcommand{\intr}{\mathrm{int}}
\newtheorem{theorem}{Theorem}[section] 
\crefname{theorem}{Theorem}{Theorems}
\newtheorem{lemma}[theorem]{Lemma}
\crefname{lemma}{Lemma}{Lemmas}
\crefname{proposition}{Proposition}{Propositions}
\newtheorem{corollary}[theorem]{Corollary}
\crefname{corollary}{Corollary}{Corollaries}
\crefname{conjecture}{Conjecture}{Conjectures}
\theoremstyle{definition} 
\crefname{definition}{Definition}{Definitions}
\newtheorem{remark}[theorem]{Remark}
\crefname{remark}{Remark}{Remarks}
\newtheorem{example}[theorem]{Example}
\crefname{example}{Example}{Examples}
\crefname{figure}{Figure}{Figures}
\begin{document}
\title{A principle for converting Lindstr\"om-type lemmas
to Stembridge-type theorems, with applications to walks, groves, and alternating flows}
\author{Owen Biesel\footnote{Carleton College, Northfield, MN. Email: obiesel@carleton.edu
\newline\newline
\emph{Keywords:} Pfaffian, loop-erased walk, grove, alternating flow}}
\maketitle

\abstract{We prove that Fomin's generalization of Lindstr\"om's lemma for paths on acyclic directed graphs to walks on general directed graphs also generalizes a theorem of Stembridge in the same way. Moreover, we show that whenever a family of operations satisfies a Lindstr\"om-type determinant relation, a related family of operations satisfies a Stembridge-type Pfaffian relation. We give example applications to Kenyon and Wilson's work on groves and to Talaska's work on alternating flows.}

\tableofcontents

\section{Introduction}

In \cite{Lindstrom}, Bernt Lindstr\"om wrote down a connection between determinants and families of disjoint paths in directed graphs, which we will call Lindstr\"om's lemma and review in \cref{background}. Ira Gessel and Xavier Viennot noted in \cite{Gessel-Viennot} that his proof works only for \emph{acyclic} directed graphs, but later, Sergey Fomin successfully generalized Lindstr\"om's lemma to arbitrary directed graphs in \cite{Fomin}, using walks in place of paths, with a modified notion of disjointness using a technique of Gregory Lawler's called loop-erasure.
Meanwhile, in \cite{Stembridge} John Stembridge found a connection between families of disjoint paths in directed acyclic graphs and Pfaffians of certain skew-symmetric matrices. 
We can organize these three results into a table, as shown below:
\begin{center}
\begin{tabular}{c|cc}
 & Paths on & Walks on \\
 & acyclic & general \\
 & directed & directed \\
 & graphs & graphs \\
 \hline
 Determinant & Lindstr\"om's & Fomin's \\ 
 relation: & lemma & theorem \\
  &   &   \\
  Pfaffian & Stembridge's &  \\
   relation: & theorem & 
 \end{tabular}
\end{center}

The empty space in the lower-right corner of the table would be a Pfaffian relation for directed graphs that are not necessarily acyclic, and it is the first goal of this paper to show that the same modifications Fomin made to Lindstr\"om's lemma also suffice to generalize Stembridge's theorem to arbitrary directed graphs.

We will prove this result by first noting a more general principle, which we call the determinant-to-Pfaffian principle, giving a way of producing a Pfaffian identity similar to Stembridge's theorem whenever we have a determinant identity similar to Lindstr\"om's lemma. As examples, we give a proof of Stembridge's theorem that uses fewer hypotheses than in Stembridge's original formulation, deduce the version of Stembridge's theorem for general directed graphs from Fomin's theorem, and produce new Stembridge-type identities for groves on undirected graphs and alternating flows on planar circular networks directly from analogues to Lindstr\"om's lemma in those cases.  A summary of the main results of this paper are shown in the table below:

\begin{center}
\begin{tabular}{c|cccc}
 & Paths on & Walks on & Groves on & Alternating \\
 & acyclic & general & undirected & flows on \\
 & directed & directed & graphs with & planar circular  \\
 & graphs & graphs & boundary & networks\\
 \hline
 Determinant & Lindstr\"om's & Fomin's & \multirow{2}{*}{\cref{grove-determinant}} & \multirow{2}{*}{\cref{flow-determinant}}\\ 
 relation: & lemma & theorem & \\
 $\downarrow$ & $\downarrow$ & $\downarrow$ & $\downarrow$ & $\downarrow$\\
  Pfaffian & Stembridge's &  \multirow{2}{*}{\cref{fomin-stembridge}} &  \multirow{2}{*}{\cref{grove-pfaffian}} & \multirow{2}{*}{\cref{flow-pfaffian}}\\
   relation: & theorem &  & 
 \end{tabular}
 \end{center}
 
We begin by reviewing Lindstr\"om's lemma and Stembridge's theorem in \cref{background} (the readers already familiar with these results may skim these sections to acquaint themselves with our notation). In \cref{general-directed} we review the terminology necessary to state both Fomin's theorem and our generalization of Stembridge's theorem to arbitrary directed graphs, \cref{fomin-stembridge}. In \cref{main-section} we state and prove the determinant-to-Pfaffian principle (\cref{main-thm}), and use it to prove the generalization of Stembridge's theorem. Finally, in \cref{undirected,flows} we prove Lindstr\"om-type lemmas for groves on undirected graphs and flows on planar circular networks, and deduce Stembridge-type theorems using the determinant-to-Pfaffian principle.

The author would like to thank Pavlo Pylyavskyy for many helpful conversations throughout the research and writing process, including for alerting the author to the question of whether Stembridge's theorem could be generalized along the lines of Fomin's theorem, and for pointing out the work of Talaska on alternating flows.
 
\section{Lindstr\"om's lemma and Stembridge's theorem}\label{background}

In this section, $G$ will be an acyclic directed graph with vertex set $\V(G)$ and edge set $\Ed(G)$.  We say that such a graph $G$ is \emph{weighted} if to each edge $e\in\Ed(G)$ is assigned a weight $\wt(e)$ in some commutative ring, often taken to be $\R$, the polynomial ring $\Z[e: e\in\Ed(G)]$, or the formal power series ring $\Z[[e: e\in\Ed(G)]]$.

 In the following, we will leave the weight ring implicit and assume that every sum we write down converges, either through a finiteness assumption or by working in the ring of formal power series if necessary.

We begin by reviewing Lindstr\"om's lemma, the most basic link between determinants and families of pairwise disjoint paths. Given two vertices $a,b\in \V(G)$, we define $\Path(a,b)$ as the set of directed paths $a\to b$ in $G$, and $P(a,b)$ as the sum $\sum_{p\in\Path(a,b)} \wt(p)$, where the weight $\wt(p)$ of a path $p$ is the product of the weights of its edges.

We extend this terminology to tuples $\mathbf{a}=(a_1,\dots,a_k)$ and $\mathbf{b} = (b_1,\dots,b_k)$ of vertices by defining $\Path[k](\mathbf{a},\mathbf{b})$ as the set of families of paths $\mathbf{p}=(p_1,\dots,p_k)$ such that each $p_i$ is a path from $a_i$ to $b_i$ and if $i\neq j$ then $p_i$ and $p_j$ are disjoint (i.e.\ have no vertices in common). We define $P_k(\mathbf{a},\mathbf{b})$ as the sum of the weights of these path families:
\[P_k(\mathbf{a},\mathbf{b}) = \sum_{\mathbf{p}\in\Path[k](\mathbf{a},\mathbf{b})} \wt(\mathbf{p}),\]
where the weight of a family of paths $\mathbf{p}=(p_1,\dots,p_k)$ is the product of the path weights: $\wt(\mathbf{p}) = \wt(p_1)\dots \wt(p_k)$. We also define $\tilde P_k(\mathbf{a},\mathbf{b})$ as the signed sum
\[\tilde P_k(\mathbf{a},\mathbf{b}) = \sum_{\sigma\in\symgroup{k}}\sgn(\sigma) P_k(\mathbf{a},\mathbf{b}_\sigma),\]
where $\mathbf{b}_\sigma$ is the permuted tuple $(b_{\sigma(1)},b_{\sigma(2)},\dots,b_{\sigma(k)})$.

\begin{theorem}[Lindstr\"om's lemma, Theorem 1 in \cite{Gessel-Viennot}]\label{Lindstrom}
 Let $G$ be a weighted acyclic directed graph. For any natural number $k$ and pair of $k$-tuples $\mathbf{a}=(a_1,\dots,a_k)$ and $\mathbf{b}=(b_1,\dots,b_n)$ of vertices of $G$, we have
 \[\tilde P_k(\mathbf{a},\mathbf{b}) = \det\bigl(P(a_i,b_j)\bigr)_{i,j=1}^k.\]
\end{theorem}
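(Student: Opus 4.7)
The plan is to prove Lindström's lemma via the classical sign-reversing involution argument of Lindström--Gessel--Viennot. First I would expand the determinant as
\[\det\bigl(P(a_i,b_j)\bigr)_{i,j=1}^k = \sum_{\sigma\in\symgroup{k}}\sgn(\sigma)\prod_{i=1}^k P(a_i,b_{\sigma(i)}),\]
and then expand each factor $P(a_i,b_{\sigma(i)})$ as a sum over paths, to rewrite the determinant as a signed sum over all pairs $(\sigma,\mathbf{p})$, where $\sigma\in\symgroup{k}$ and $\mathbf{p}=(p_1,\dots,p_k)$ is a tuple of paths (not required to be disjoint) with $p_i\in\Path(a_i,b_{\sigma(i)})$. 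The contribution of such a pair is $\sgn(\sigma)\,\wt(\mathbf{p})$. The terms where $\mathbf{p}$ is pairwise disjoint contribute exactly $\tilde P_k(\mathbf{a},\mathbf{b})$, so it remains to show that the contributions from tuples with at least one pair of intersecting paths cancel out.

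To cancel these, I would define a sign-reversing, weight-preserving involution on the set of pairs $(\sigma,\mathbf{p})$ in which at least two $p_i$'s share a vertex. Given such a pair, let $i$ be the smallest index such that $p_i$ intersects some other $p_j$, let $v$ be the first vertex of $p_i$ (in the order of traversal) lying on another path, and let $j$ be the smallest index $j\neq i$ with $v\in p_j$. The involution then swaps the tails of $p_i$ and $p_j$ after $v$, producing a new tuple $\mathbf{p}'$ of paths where $p_i'$ goes from $a_i$ to $b_{\sigma(j)}$ and $p_j'$ goes from $a_j$ to $b_{\sigma(i)}$; pairing $\mathbf{p}'$ with the composition $\sigma' = \sigma\circ(i\;j)$ keeps the endpoints matched and reverses the sign of the permutation, while clearly preserving the total edge weight.

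The main obstacle is checking that this tail-swapping operation is genuinely an involution and remains within the set of valid path tuples: I would need to verify that the swapped paths $p_i'$ and $p_j'$ are still simple directed paths, that applying the procedure to $(\sigma',\mathbf{p}')$ returns the original pair, and that the selection of $(i,j,v)$ is unchanged by the swap. This is precisely the point where the acyclicity hypothesis on $G$ is essential: it guarantees that a directed path cannot revisit $v$, so the swapped concatenations are genuine paths and that the first intersection vertex $v$ and the indices $i,j$ are canonically determined by the data of $(\sigma,\mathbf{p})$ alone, making the involution well-defined.

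Once this is set up, the signed contributions of the intersecting tuples cancel in pairs, leaving
\[\det\bigl(P(a_i,b_j)\bigr)_{i,j=1}^k = \sum_{\sigma\in\symgroup{k}}\sgn(\sigma)\sum_{\mathbf{p}\in\Path[k](\mathbf{a},\mathbf{b}_\sigma)}\wt(\mathbf{p}) = \tilde P_k(\mathbf{a},\mathbf{b}),\]
as desired.
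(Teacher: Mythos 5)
Your proposal is correct and is exactly the classical Lindstr\"om--Gessel--Viennot sign-reversing involution that the paper itself invokes (it cites the proof to \cite{Gessel-Viennot} and sketches precisely this strategy in the remark following the statement). Your choice of canonical data --- smallest $i$ whose path meets another, first such vertex $v$ along $p_i$, smallest $j\neq i$ with $v\in p_j$ --- does make the tail-swap a well-defined involution, and you correctly isolate acyclicity as the reason the swapped walks are again simple paths.
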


\begin{example}
 For example, in the weighted directed acyclic graph shown in \cref{figure-DAG}, we have two paths from vertex $2$ to vertex $4$, one using the edges labeled $c$ and $e$, and one using the single edge labeled $f$, so $P(2,4) = ce+f$. \cref{figure-lindstrom} shows that if $\mathbf{a}=(1,2)$ and $\mathbf{b}=(3,4)$, then $\tilde P_2(\mathbf{a},\mathbf{b}) = abf-ade$. We also have $\det\begin{pmatrix} P(1,3) & P(1,4) \\ P(2,3) & P(2,4)\end{pmatrix} = \det\begin{pmatrix} ab & ae \\ bc+d & ce+f\end{pmatrix} = abf - ade$, as Lindstr\"om's lemma predicts.
\end{example}

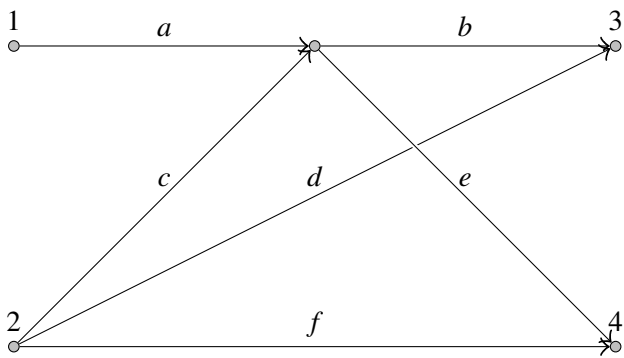
\begin{figure}[h]
\[\begin{tikzpicture}[scale=4]
 \vertex{a1}{(0,1)}{$1$}
 \vertex{a2}{(0,0)}{$2$}
 \vertex{i1}{(1,1)}{}
 \vertex{b1}{(2,1)}{$3$}
 \vertex{b2}{(2,0)}{$4$}
 \arc{a1}{i1}{$a$}
 \arc{i1}{b1}{$b$}
 \arc{a2}{i1}{$c$}
 \arc{a2}{b1}{$d$}
 \arc{i1}{b2}{$e$}
 \arc{a2}{b2}{$f$}
\end{tikzpicture}\]
\caption{
A directed acyclic graph with five vertices and six edges. Each edge is weighted by a formal variable. 
}\label{figure-DAG}\end{figure}

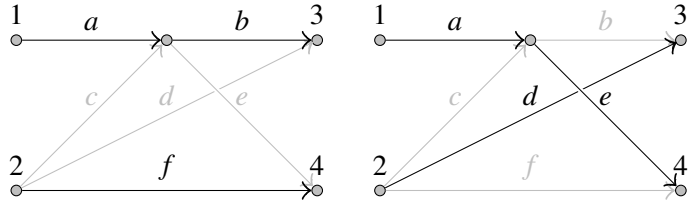
\begin{figure}[h]
\[\begin{tikzpicture}[scale=2]
 \vertex{a1}{(0,0)}{$2$}
 \vertex{a2}{(0,1)}{$1$}
 \vertex{i1}{(1,1)}{}
 \vertex{b1}{(2,0)}{$4$}
 \vertex{b2}{(2,1)}{$3$}
 \arc[lightgray]{a1}{b2}{$d$}
 \arc[lightgray]{a1}{i1}{$c$}
 \arc[lightgray]{i1}{b1}{$e$}
 \arc{a2}{i1}{$a$}
 \arc{a1}{b1}{$f$}
 \arc{i1}{b2}{$b$}
\end{tikzpicture}
\quad
\begin{tikzpicture}[scale=2]
 \vertex{a1}{(0,0)}{$2$}
 \vertex{a2}{(0,1)}{$1$}
 \vertex{i1}{(1,1)}{}
 \vertex{b1}{(2,0)}{$4$}
 \vertex{b2}{(2,1)}{$3$}
 \arc[lightgray]{a1}{b1}{$f$}
 \arc[lightgray]{a1}{i1}{$c$}
 \arc[lightgray]{i1}{b2}{$b$}
 \arc{a1}{b2}{$d$}
 \arc{a2}{i1}{$a$}
 \arc{i1}{b1}{$e$}
\end{tikzpicture}\]
\caption{The two terms in $\tilde P_2(\mathbf{a},\mathbf{b})$ are $abf$ and $-ade$.}
\label{figure-lindstrom}
\end{figure}

\begin{remark}\label{acyclic-necessary}
 The content of Lindstr\"om's lemma is that we need only sum over \emph{disjoint} families of paths; expanding the determinant $\det\bigl(P(a_i,b_j)\bigr)_{i,j=1}^n$ yields the defining expression of $\tilde P_k(\mathbf{a},\mathbf{b})$ but with general families of paths $p_i:a_i\to b_{\sigma(i)}$ instead of disjoint families. The proof goes by constructing a sign-reversing involution on the collection of non-disjoint families of paths, so that only the disjoint families remain after cancellation.
\end{remark}

\renewcommand\arc[4][]{\draw[line width=2, white, shorten >=.1cm, shorten <=.1cm](#2)--(#3);\draw[->, #1] (#2) -- node[above]{#4} (#3);}

Like Lindstr\"om's lemma, Stembridge's theorem also concerns a matrix whose entries are weights of path families, but interprets its \emph{Pfaffian} instead of its determinant. The Pfaffian of a skew-symmetric matrix is a canonical square root of its determinant, and is an integer polynomial in its above-diagonal entries. Each term in $\Pf(A)$ corresponds to a perfect matching on the set of rows (or columns) of $A$, and the term corresponding to a matching is the product of all entries $a_{ij}$ for which $i$ is matched with $j$, with a sign determined by whether the number of ``crossings'' in the matching is even or odd; see \cref{pfaffian}.
\begin{figure}[h]
\[\begin{array}{rcl}\det\begin{pmatrix*}
 0 & a & b & \hphantom{-}c\\
 -a & 0 & d & e \\
 -b & -d & 0 & f\\
 -c & -e & -f & 0
\end{pmatrix*} & = &(af - be + cd)^2 \\ \\
\Pf\begin{pmatrix*}[r]
 0 & a & b & \hphantom{-}c\\
 -a & 0 & d & e \\
 -b & -d & 0 & f\\
 -c & -e & -f & 0
\end{pmatrix*}& = & af - be + cd
\end{array}\qquad
\begin{tikzpicture}[baseline={([yshift=-.5ex]current bounding box.center)}] \vertex{11}{(1,0)}{-90:1}
 \vertex{12}{(2,0)}{-90:2}
 \vertex{13}{(3,0)}{-90:3}
 \vertex{14}{(4,0)}{-90:4}
 \draw (11) to [bend left] node[above]{$a$} (12);
 \draw (13) to [bend left] node[above]{$f$} (14); 
 
 \vertex{21}{(1,-1.5)}{-90:1}
 \vertex{22}{(2,-1.5)}{-90:2}
 \vertex{23}{(3,-1.5)}{-90:3}
 \vertex{24}{(4,-1.5)}{-90:4}
 \draw (21) to [bend left] node[above]{$b$} (23);
 \draw (22) to [bend left] node[above]{$e$} (24);
 
 \vertex{31}{(1,-3)}{-90:1}
 \vertex{32}{(2,-3)}{-90:2}
 \vertex{33}{(3,-3)}{-90:3}
 \vertex{34}{(4,-3)}{-90:4}
 \draw (31) to [bend left=45] node[above]{$c$} (34);
 \draw (32) to [bend left] node[above]{$d$} (33);
\end{tikzpicture}\]
\caption{A $4\times 4$ skew-symmetric matrix with its determinant and Pfaffian. Each term in the Pfaffian corresponds to a perfect matching on $\{1,2,3,4\}$.}
\label{pfaffian}
\end{figure}

Stembridge's theorem also refers to a notion called \emph{compatibility}: two ordered subsets $A$ and $B$ of a directed graph $G$ are called $G$-\emph{compatible} if, for all choices of vertices $a<a'$ in $A$ and $b<b'$ in $B$ and all pairs of paths $p: a\to b'$ and $q: a'\to b$, there must be nontrivial intersection between $p$ and $q$. This can be understood as a kind of planarity condition on $A$ and $B$ (see \cref{planar-compatible}), although the notion is more general.

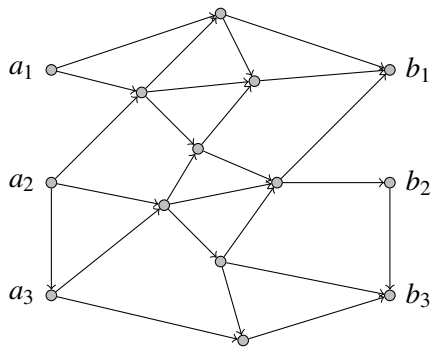
\begin{figure}[h]
\[\begin{tikzpicture}[scale=1.5]
 \vertex{a1}{(0,3)}{180:$a_1$}
 \vertex{a2}{(0,2)}{180:$a_2$}
 \vertex{a3}{(0,1)}{180:$a_3$}
 \vertex{b1}{(3,3)}{0:$b_1$}
 \vertex{b2}{(3,2)}{0:$b_2$}
 \vertex{b3}{(3,1)}{0:$b_3$}
 \vertex{i1}{(1.5,3.5)}{}
 \vertex{i2}{(.8,2.8)}{}
 \vertex{i3}{(1.8,2.9)}{}
 \vertex{i4}{(1.3,2.3)}{}
 \vertex{i5}{(1,1.8)}{}
 \vertex{i6}{(2,2)}{}
 \vertex{i7}{(1.5,1.3)}{}
 \vertex{i8}{(1.7,.6)}{}
 
 \arc{a1}{i1}{}
 \arc{i1}{b1}{}
 \arc{a1}{i2}{}
 \arc{i2}{i1}{}
 \arc{i1}{i3}{}
 \arc{i3}{b1}{}
 \arc{a2}{i2}{}
 \arc{i2}{i3}{}
 \arc{i4}{i3}{}
 \arc{i2}{i4}{}
 \arc{a2}{i5}{}
 \arc{i5}{i4}{}
 \arc{i4}{i6}{}
 \arc{i6}{b1}{}
 \arc{i6}{b2}{}
 \arc{i5}{i6}{}
 \arc{i7}{i6}{}
 \arc{a2}{a3}{}
 \arc{a3}{i5}{}
 \arc{i5}{i7}{}
 \arc{a3}{i8}{}
 \arc{i7}{i8}{}
 \arc{i8}{b3}{}
 \arc{i7}{b3}{}
 \arc{b2}{b3}{}
\end{tikzpicture}
\]
\caption{A directed acyclic graph $G$ in which the ordered sets $A=\{a_1,a_2,a_3\}$ and $B=\{b_1,b_2,b_3\}$ are $G$-compatible. In general, if $G$ is planar and the vertices $a_1,\dots,a_m, b_k,\dots,b_1$ appear in order around a single face of $G$ (in this case, the outer face), then $A=\{a_1,\dots,a_m\}$ and $B=\{b_1,\dots,b_k\}$ are $G$-compatible.}
\label{planar-compatible}
\end{figure}

\begin{theorem}[Stembridge's theorem, Theorem 3.1 in \cite{Stembridge}]\label{Stembridge} 
 Let $G$ be a weighted acyclic directed graph, and fix an ordered set of vertices $B\subset \V(G)$. For all ordered tuples of vertices $\mathbf{a} = (a_1,\dots,a_k)$ that are $G$-compatible with $B$, define $Q_{k}(\mathbf{a})$ as the sum  of $P_k(\mathbf{a},\mathbf{b})$ over all tuples $\mathbf{b} = (b_1,\dots,b_k)$ with $b_1<b_2<\dots<b_k$ in $B$.
 Then for even $k$,
\[Q_k(a_1,\dots,a_k) = \Pf[Q_2(a_i,a_j)]_{1\leq i<j\leq k}.\]
\end{theorem}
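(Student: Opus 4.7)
The plan is to realize both sides as the Pfaffian of a single matrix via a Cauchy-Binet-type expansion. Let $B = \{b_1 < b_2 < \dots < b_n\}$ be enumerated in order, and form the $k \times n$ matrix $M$ with entries $M_{ij} = P(a_i, b_j)$, together with the skew-symmetric $n \times n$ matrix $S$ given by $S_{ij} = \sgn(j-i)$. I will compute $\Pf(M S \transpose{M})$ in two different ways and match the answers.

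First, for $i < j$, a direct expansion gives
\[(M S \transpose{M})_{ij} = \sum_{s<t} \det\begin{pmatrix} M_{is} & M_{it} \\ M_{js} & M_{jt}\end{pmatrix} = \sum_{s<t} \tilde P_2((a_i,a_j),(b_s,b_t))\]
by \cref{Lindstrom} applied to pairs. When $i < j$ and $s < t$, $G$-compatibility of $\mathbf{a}$ with $B$ forces any pair of disjoint paths $a_i \to b_t$, $a_j \to b_s$ to intersect, so $P_2((a_i,a_j),(b_t,b_s)) = 0$ and the alternating sum collapses to $Q_2(a_i, a_j)$. Hence $M S \transpose{M}$ is precisely the skew-symmetric matrix whose Pfaffian is the right-hand side of the theorem.

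Second, I invoke the Pfaffian analogue of Cauchy-Binet,
\[\Pf(M S \transpose{M}) = \sum_{J} \Pf(S_{J,J}) \det(M_{*,J}),\]
with $J = \{j_1 < \dots < j_k\}$ ranging over $k$-element subsets of $\{1,\dots,n\}$. Each submatrix $S_{J,J}$ has every above-diagonal entry equal to $1$, so $\Pf(S_{J,J}) = 1$ (a short induction on $k$ via cofactor expansion). By \cref{Lindstrom} again, $\det(M_{*,J}) = \tilde P_k(\mathbf{a}, \mathbf{b}^J)$ where $\mathbf{b}^J = (b_{j_1}, \dots, b_{j_k})$; and by compatibility, any non-identity permutation $\sigma$ in the definition of $\tilde P_k$ produces some pair $i < j$ with $b_{j_{\sigma(i)}} > b_{j_{\sigma(j)}}$ in $B$-order, forcing $P_k(\mathbf{a}, \mathbf{b}^J_\sigma) = 0$. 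Thus $\det(M_{*,J}) = P_k(\mathbf{a}, \mathbf{b}^J)$, and summing over all such $J$ produces exactly $Q_k(\mathbf{a})$.

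The main obstacle is justifying the Cauchy-Binet-type identity for Pfaffians and the evaluation $\Pf(S_{J,J}) = 1$; both are classical but require committing to a sign convention for Pfaffians consistent with these identities. Once these ingredients are in hand, the proof is bookkeeping, with compatibility entering twice in the same way: once to trim an alternating sum over $2$-element endpoint pairs into $Q_2(a_i,a_j)$, and once to trim an alternating sum over permutations of $\mathbf{b}^J$ into $P_k(\mathbf{a}, \mathbf{b}^J)$.
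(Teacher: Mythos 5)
Your proof is correct and is essentially the paper's argument: the paper packages the computation of $\Pf(DS\transpose{D})$ via the Ishikawa--Wakayama minor summation formula and the $\Pf = 1$ evaluation into the abstract determinant-to-Pfaffian principle (\cref{main-thm}), and then obtains Stembridge's theorem by taking $\tilde C_k = \tilde P_k$, using compatibility exactly as you do to reduce $\tilde P_k$ to $P_k$. The only difference is organizational --- you carry out the specialization inline rather than factoring it through the general principle.
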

See \cref{stembridge-picture} for a visualization of Stembridge's theorem.
\begin{figure}[h]
\[\begin{tikzpicture}[scale=0.75, thick]
 \vertex{a1}{(0,5)}{}
 \vertex{a2}{(0,4)}{180:$a_i$}
 \vertex{a3}{(0,3)}{}
 \vertex{a4}{(0,2)}{180:$a_j$}
 \vertex{a5}{(0,1)}{}
 \vertex{a6}{(0,0)}{}
 \draw (4,5.5) to node[right]{$B$} (4,-.5);
 \draw[->, decorate, decoration={random steps,segment length=3pt,amplitude=1pt}] (a2) to[bend right=10] (4, 4.5);
 \draw[->, decorate, decoration={random steps,segment length=3pt,amplitude=1pt}] (a4) to[bend left=10] (4, 1);
\end{tikzpicture}
\qquad\qquad
\begin{tikzpicture}[scale=0.75, thick]
 \vertex{a1}{(0,5)}{180:$a_1$}
 \vertex{a2}{(0,4)}{180:$a_2$}
 \vertex{a3}{(0,3)}{180:$a_3$}
 \vertex{a4}{(0,2)}{180:$a_4$}
 \vertex{a5}{(0,1)}{180:$\vdots$}
 \vertex{a6}{(0,0)}{180:$a_k$}
 \draw (4,5.5) to node[right]{$B$} (4,-.5);
 \draw[->, decorate, decoration={random steps,segment length=3pt,amplitude=1pt}] (a1) to[bend right=10] (4, 5.1);
 \draw[->, decorate, decoration={random steps,segment length=3pt,amplitude=1pt}] (a2) to[bend right=10] (4, 4.5);
 \draw[->, decorate, decoration={random steps,segment length=3pt,amplitude=1pt}] (a3) to[bend left=10] (4, 2.8);
 \draw[->, decorate, decoration={random steps,segment length=3pt,amplitude=1pt}] (a4) to[bend left=10] (4, 1.6);
 \draw[->, decorate, decoration={random steps,segment length=3pt,amplitude=1pt}] (a5) to[bend left=10] (4, 1);
 \draw[->, decorate, decoration={random steps,segment length=3pt,amplitude=1pt}] (a6) to[bend left=10] (4, -.2);
\end{tikzpicture}\]
\caption{For each pair of vertices $(a_i,a_j)$, we calculate the sum of all weights of disjoint path families from $(a_i,a_j)$ to $B$. The Pfaffian of the skew-symmetric matrix whose above-diagonal entries are these $\binom{k}{2}$ numbers is the sum of the weights of disjoint path families from \emph{all} of the $a_i$ to $B$.}
\label{stembridge-picture}
\end{figure}
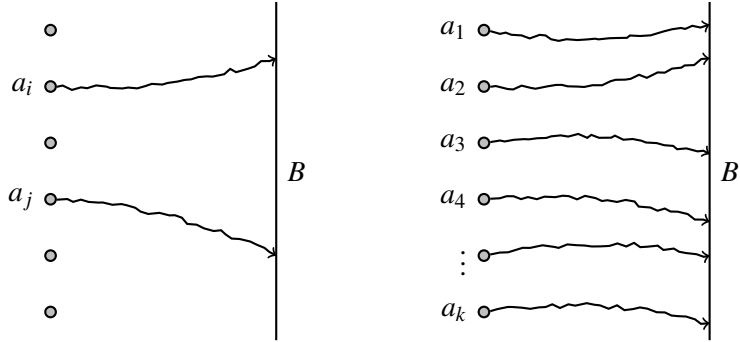

\section{Walks on general directed graphs}\label{general-directed}

Lindstr\"om originally published in \cite[Lemma 1]{Lindstrom} an erroneous proof of \cref{Lindstrom} without the assumption that $G$ be acyclic; if $G$ is not acyclic, then the proof's involution recipe might take a family of paths and produce paths with cycles in them, that is, not paths but walks. (A correct proof of \cref{Lindstrom} in the acyclic case may be found in \cite{Gessel-Viennot}.) For graphs with directed cycles, Lindstr\"om's lemma may fail: in the directed graph shown in \cref{graph-with-directed-cycle}, with $\mathbf{a}=(1,2)$ and $\mathbf{b} = (3,4)$, we have $\tilde P_2(\mathbf{a},\mathbf{b}) = (ab)(ceg)$, but 
 \[\det\begin{pmatrix}
  P(1,3) & P(1,4) \\ P(2,3) & P(2,4)
 \end{pmatrix} = \det\begin{pmatrix}
  ab & adeg \\ bcef & ceg
 \end{pmatrix} = (ab)(ceg)(1 - def).
\]

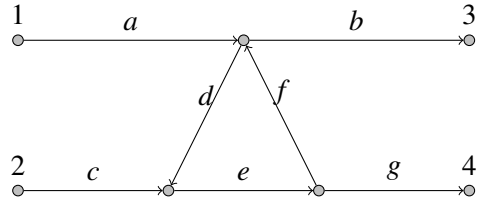
\begin{figure}[h]
\[\begin{tikzpicture}[scale=2]
 \vertex{a1}{(0,1)}{$1$}
 \vertex{a2}{(0,0)}{$2$}
 \vertex{i1}{(1.5,1)}{}
 \vertex{i2}{(1,0)}{}
 \vertex{i3}{(2,0)}{}
 \vertex{b1}{(3,1)}{$3$}
 \vertex{b2}{(3,0)}{$4$}
 \arc{a1}{i1}{$a$}
 \arc{i1}{b1}{$b$}
 \arc{a2}{i2}{$c$}
 \arc{i1}{i2}{$d$}
 \arc{i2}{i3}{$e$}
 \arc{i3}{i1}{$f$}
 \arc{i3}{b2}{$g$}
\end{tikzpicture}\]
\caption{A graph containing a directed cycle for which Lindstr\"om's lemma fails.}
\label{graph-with-directed-cycle}
\end{figure}

A correct generalization of Lindstr\"om's lemma to not-necessarily acyclic directed graphs is Fomin's theorem, which we review now.

In this section, $G$ will be a weighted directed graph that is not necessarily acyclic. Since the sign-reversing involution used in the proof  of Lindst\"om's lemma might produce a family of walks when given a family of paths, one might wonder whether using walks in place of paths might provide a generalization of Lindstr\"om's lemma to not-necessarily-acyclic graphs. Fomin showed in 2001 that the answer is yes, provided that one use an asymmetric form of disjointness involving \emph{loop-erasure}---the loop-erasure $\LE(w)$ of a walk $w$ is formed by following $w$ and erasing any loops as they form (see \cite{Fomin,Lawler} and \cref{loop-erase}).

\begin{figure}[h]
\[\begin{tikzpicture}[baseline=0.25cm]
 \draw[->, rounded corners=10pt](0,0) -- (1.5,1.5) -- (2.5,.5) -- (1.5,-.5) -- (0,1) -- (.5,1.5) -- (2.25,-.25) -- (1.5,-1) -- (.75,-.25) -- (2.5,1.5);
\end{tikzpicture}\ \rightsquigarrow\ 
\begin{tikzpicture}[baseline=0.25cm]
\path[->, rounded corners=10pt](0,0) -- (1.5,1.5) -- (2.5,.5) -- (1.5,-.5) -- (0,1) -- (.5,1.5) -- (2.25,-.25) -- (1.5,-1) -- (.75,-.25) -- (2.5,1.5);
 \draw[->, rounded corners=10pt](0,0) -- (1.5,1.5) -- (2.5,.5) -- (1.5,-.5) -- (.4,.6);
\end{tikzpicture}
\begin{tikzpicture}[baseline=0.25cm]
 \draw[rounded corners=10pt](0,0) -- (.5,.5);
 \draw[rounded corners=10pt, dotted](.5,.5) -- (1.5,1.5) -- (2.5,.5) -- (1.5,-.5)--(.5,.5);
 \draw[->, rounded corners=10pt](.5,.5)-- (0,1) -- (.5,1.5) -- (2.25,-.25) -- (1.5,-1) -- (.75,-.25) -- (1.6,.6);
\end{tikzpicture}
\begin{tikzpicture}[baseline=0.25cm]
 \draw[rounded corners=10pt](0,0) -- (.5,.5);
 \draw[rounded corners=10pt, dotted](.5,.5) -- (1.5,1.5) -- (2.5,.5) -- (1.5,-.5)--(.5,.5);
 \draw[rounded corners=10pt](.5,.5)-- (0,1) -- (.5,1.5) -- (1.5,.5);
 \draw[rounded corners=10pt, dotted](1.5,.5) -- (2.25,-.25) -- (1.5,-1) -- (.75,-.25) -- (1.5,.5);
 \draw[->, rounded corners=10pt](1.5,.5) -- (2.5,1.5);
\end{tikzpicture}\]
\caption{A walk and its loop erasure. As we traverse the walk, we delete each cycle as it appears, leaving a path.
}\label{loop-erase}
\end{figure}
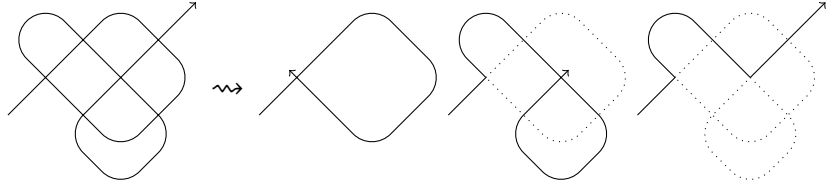

To set up Fomin's theorem, we make the following definitions. Given two vertices $a$ and $b$ of $G$, we let $\Walk(a,b)$ be the set of all walks $w:a\to b$ in $G$, and $W(a,b)$ the sum of these walks' weights, where the weight of a walk is the product of the weights of its edges.

For tuples of vertices $\mathbf{a}=(a_1,\dots,a_k)$ and $\mathbf{b}=(b_1,\dots,b_k)$, we let $\Walk[k](\mathbf{a},\mathbf{b})$ be the collection of all families of walks $(w_1,\dots,w_k)$ where
\begin{itemize}
 \item each $w_i$ is a walk from $a_i$ to $b_i$, and
 \item if $i<j$ then $w_j$ is vertex-disjoint from $\LE(w_i)$.
\end{itemize}
We let $\tilde W_k(\mathbf{a},\mathbf{b})$ be the signed sum of the walk family weights as before:

\[\tilde W_k(\mathbf{a},\mathbf{b}) = \sum_{\sigma\in\symgroup{k}}\sgn(\sigma)\sum_{\mathbf{w}\in\Walk[k](\mathbf{a},\mathbf{b}_\sigma)} \wt(\mathbf{w}),\]
where the weight of a family of walks $\mathbf{w}=(w_1,\dots,w_k)$ is the product of the weights of the walks $w_i$.

\begin{theorem}[Fomin's theorem, Theorem 6.1 in \cite{Fomin}]\label{fomin}
  Let $G$ be a weighted directed graph, not necessarily acyclic. For any natural number $k$ and pair of $k$-tuples $\mathbf{a}=(a_1,\dots,a_k)$ and $\mathbf{b}=(b_1,\dots,b_n)$ of vertices of $G$, we have
 \[\tilde W_k(\mathbf{a},\mathbf{b}) = \det\bigl(W(a_i,b_j)\bigr)_{i,j=1}^k.\]
\end{theorem}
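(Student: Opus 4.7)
The plan is to follow the Lindström--Gessel--Viennot paradigm adapted to loop-erased walks: expand the determinant on the right as a signed sum over \emph{all} families of walks, and cancel the families violating Fomin's asymmetric $\LE$-disjointness condition via a sign-reversing, weight-preserving involution. Explicitly,
\[\det\bigl(W(a_i,b_j)\bigr)_{i,j=1}^k = \sum_{\sigma\in\symgroup{k}}\sgn(\sigma)\prod_{i=1}^k W(a_i,b_{\sigma(i)}) = \sum_{(\sigma,\mathbf{w})}\sgn(\sigma)\wt(\mathbf{w}),\]
where the final sum runs over pairs $(\sigma,\mathbf{w})$ with $\sigma\in\symgroup{k}$ and $\mathbf{w}=(w_1,\dots,w_k)$ an arbitrary family of walks $w_i\colon a_i\to b_{\sigma(i)}$ in $G$. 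By definition, $\tilde W_k(\mathbf{a},\mathbf{b})$ is the same sum restricted to those families also satisfying $w_j\cap\LE(w_i)=\emptyset$ for all $i<j$. It therefore suffices to produce a fixed-point-free, sign-reversing, weight-preserving involution on the complementary set of \emph{bad} pairs.

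The involution is a canonical tail-swap. Given a bad $(\sigma,\mathbf{w})$, let $j$ be the smallest index such that $w_j$ meets $\LE(w_i)$ for some $i<j$. By minimality of $j$, the loop-erasures $\LE(w_1),\dots,\LE(w_{j-1})$ are pairwise vertex-disjoint paths, so there is a well-defined first vertex $v$ at which $w_j$ enters their union, together with a unique index $i<j$ such that $v\in\LE(w_i)$. Split $w_i$ at its \emph{last} visit to $v$ as $w_i=\alpha_i\cdot\beta_i$ and split $w_j$ at its \emph{first} visit to $v$ as $w_j=\alpha_j\cdot\beta_j$, and replace the pair $(w_i,w_j)$ by
\[(\tilde w_i,\tilde w_j)=\bigl(\alpha_i\cdot\beta_j,\ \alpha_j\cdot\beta_i\bigr),\]
leaving the other walks untouched. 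The new underlying permutation is $(\sigma(i)\,\sigma(j))\circ\sigma$, which reverses the sign, and the edge multiset of the family is unchanged, so $\wt$ is preserved.

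The main obstacle is verifying that this map is an involution: that the canonical triple $(i,j,v)$ extracted from the swapped family $\tilde\mathbf{w}$ is the same as for $\mathbf{w}$, and the corresponding splits recover $\alpha_i,\beta_i,\alpha_j,\beta_j$. The choice of the \emph{last} visit in $w_i$ guarantees that $\beta_i$ does not revisit $v$ and that $\LE(\alpha_i)$ is precisely the initial segment of $\LE(w_i)$ from $a_i$ to $v$; the choice of the \emph{first} visit in $w_j$ guarantees that $\alpha_j$ avoids $\bigcup_{i'<j}\LE(w_{i'})$ except at its terminal vertex $v$. The technical core of the argument is to use these two asymmetries to show (a) that no smaller bad index appears in $\tilde\mathbf{w}$, (b) that $v$ is still the first meeting vertex of $\tilde w_j$ with $\bigcup_{i'<j}\LE(\tilde w_{i'})$, and (c) that $v$ is still the last visit of $\tilde w_i$ to $v$ (equivalently, $v\in\LE(\tilde w_i)$ with initial segment $\LE(\alpha_i)$). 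The subtle point is that $\LE(\tilde w_i)$ need not agree with $\LE(w_i)$ beyond $v$, since $\beta_j$ may revisit earlier vertices of $\LE(w_i)$; what is needed is only that the portion of the loop-erasure up to $v$ is preserved, which follows from the fact that processing $\beta_j$ on top of $\LE(\alpha_i)$ begins at $v$ and so cannot disturb the earlier segment. Once this loop-erasure bookkeeping is carried out, the bad pairs cancel in signed pairs and Fomin's identity drops out.
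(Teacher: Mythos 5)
The paper does not actually prove this statement: it is quoted as Fomin's theorem with a citation to \cite{Fomin}, so there is no internal proof to compare against, and I am assessing your argument on its own terms. Your skeleton (expand the determinant, cancel the non-$\LE$-disjoint families by a sign-reversing, weight-preserving tail swap) is the right one, and the sign and weight bookkeeping is fine, but the involution as you have set it up does not square to the identity, and the sentence you offer at the technical core is false. You choose $v$ as the first vertex along $w_j$ lying on $\bigcup_{i<j}\LE(w_i)$, and you claim that in $\tilde w_i=\alpha_i\cdot\beta_j$ the loop-erasure up to $v$ is preserved ``because processing $\beta_j$ on top of $\LE(\alpha_i)$ begins at $v$ and so cannot disturb the earlier segment.'' Loop-erasure is not local in this way: if $\beta_j$ later visits a vertex $u$ that occurs on $\LE(\alpha_i)$ \emph{before} $v$, the whole loop from $u$ through $v$ back to $u$ is erased, and $v$ may vanish from $\LE(\tilde w_i)$ entirely. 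Concretely, if $\LE(\alpha_i)=(a_i,u,v)$ and $\beta_j=(v,u,b_{\sigma(j)})$, then $\LE(\tilde w_i)=(a_i,u,b_{\sigma(j)})$, so the canonical triple extracted from $\tilde{\mathbf{w}}$ is not $(i,j,v)$ and the map is not an involution. Nothing forbids this configuration, because $\beta_j$ --- the portion of $w_j$ \emph{after} its first entry into the union --- is completely unconstrained: it may revisit $\LE(w_i)$ before $v$, and it may also hit $\LE(w_{i'})$ for $i'<i$, which simultaneously defeats your claim (a), since the pair $(i',i)$ then becomes bad and the minimal bad index drops below $j$.

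The repair, which is essentially Fomin's, is to anchor the crossing point on the loop-erased path rather than on the offending walk: take $i$ minimal such that $\LE(w_i)$ meets some later walk, let $v$ be the first vertex \emph{along $\LE(w_i)$} that lies on a later walk, let $j$ be the least index $>i$ whose walk passes through $v$, and split both $w_i$ and $w_j$ at their \emph{last} visits to $v$. With these choices the vertices of $\LE(w_i)$ preceding $v$ lie on no later walk at all, so the transplanted tail cannot erase them, $v$ survives on $\LE(\tilde w_i)$ with the same initial segment, and the triple $(i,j,v)$ together with both split points is recoverable from $\tilde{\mathbf{w}}$. As written, your argument does not establish the cancellation; the gap sits exactly at the point that separates Fomin's theorem from the acyclic Lindstr\"om--Gessel--Viennot lemma.
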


\begin{example}
 Returning to the non-acyclic directed graph from \cref{graph-with-directed-cycle}, we find that each walk from $1$ or $2$ to $3$ or $4$ can traverse the $def$ cycle any number of times, introducing a factor of $1+def +(def)^2+\dots = 1/(1-def)$. Therefore
 \begin{align*}
  \det\begin{pmatrix}
  W(1,3) & W(1,4) \\ W(2,3) & W(2,4)
 \end{pmatrix} &= \det\begin{pmatrix}
  \dfrac{ab}{1-def} & \dfrac{adeg}{1-def} \\ \dfrac{bcef}{1-def} & \dfrac{ceg}{1-def}
 \end{pmatrix}\\ 
 &= \frac{(ab)(ceg)(1 - def)}{(1-def)^2}\\
 &= \frac{(ab)(ceg)}{1-def}.
 \end{align*}
 On the other hand, $\tilde W_2((1,2), (3,4))$ sums over all walk families $w_1:1\to 3$ and $w_2:2\to 4$ such that $w_2$ doesn't intersect $\LE(w_1)$. Then $w_1$ can traverse the $def$ cycle an arbitrary number of times, but $w_2$ must be the path $ceg$. Thus the weights of all of these walk families sum to $\left(\dfrac{ab}{1-def}\right)(ceg)$, which agrees with our earlier determinant calculation in accordance with Fomin's theorem.
\end{example}

We now present a version of Stembridge's theorem for not-necessarily acyclic graphs:

\begin{theorem}\label{fomin-stembridge}
 Let $G$ be a weighted directed graph, and fix an ordered subset $B$ of the vertices of $G$. For each tuple $\mathbf{a}=(a_1,\dots,a_k)$ of vertices of $G$, define $\tilde Q_k(\mathbf{a})$ by
 \[\tilde Q_k(\mathbf{a}) = \sum_{\substack{\mathbf{b}=(b_1,\dots,b_k)\in B^k:\\b_1<\dots<b_k}} \tilde W_k(\mathbf{a},\mathbf{b}).\]
 Then if $k$ is even, we have
 \[\tilde Q_k(\mathbf{a}) = \Pf\bigl(\tilde Q_2(a_i,a_j)\bigr)_{i,j=1}^k.\]
\end{theorem}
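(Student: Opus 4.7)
The plan is to deduce \cref{fomin-stembridge} as a direct corollary of the determinant-to-Pfaffian principle (\cref{main-thm}) applied to Fomin's theorem. Indeed, Fomin's theorem provides precisely the Lindstr\"om-type determinantal input $\tilde W_k(\mathbf{a},\mathbf{b})=\det(W(a_i,b_j))_{i,j=1}^k$ that the principle requires; applying the principle with this input, summed over strictly increasing tuples $\mathbf{b}$ in the ordered set $B$, should yield the desired Pfaffian identity as output.

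Concretely, the underlying algebraic fact I would use is the Pfaffian analogue of the Cauchy--Binet formula: for a $k\times|B|$ matrix $A$ with $k$ even and any skew-symmetric $|B|\times|B|$ matrix $M$,
\[\Pf(A\,M\,A^{\top})=\sum_{b_1<\dots<b_k\text{ in }B}\det\bigl(A[b_1,\dots,b_k]\bigr)\,\Pf\bigl(M[b_1,\dots,b_k]\bigr).\]
I would specialize to $A_{i,b}=W(a_i,b)$ and to the skew-symmetric sign matrix $M$ with $M_{b,b'}=\sgn(b'-b)$ relative to the ordering on $B$. A standard computation shows $\Pf(M[b_1,\dots,b_k])=+1$ for every increasing sequence, so Fomin's theorem identifies each $k\times k$ minor $\det(A[b_1,\dots,b_k])$ with $\tilde W_k(\mathbf{a},\mathbf{b})$, and the right-hand side becomes $\sum_{b_1<\dots<b_k}\tilde W_k(\mathbf{a},\mathbf{b})=\tilde Q_k(\mathbf{a})$.

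For the left-hand side, a direct computation gives
\[(A\,M\,A^{\top})_{i,j}=\sum_{b<b'\text{ in }B}\bigl(W(a_i,b)W(a_j,b')-W(a_i,b')W(a_j,b)\bigr),\]
which by Fomin's theorem in the $k=2$ case equals $\sum_{b<b'\text{ in }B}\tilde W_2((a_i,a_j),(b,b'))=\tilde Q_2(a_i,a_j)$. Combining both sides of the Pfaffian Cauchy--Binet identity yields $\tilde Q_k(\mathbf{a})=\Pf\bigl(\tilde Q_2(a_i,a_j)\bigr)$, as required.

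The main work is in formulating and proving the determinant-to-Pfaffian principle (\cref{main-thm}) itself, and in particular in establishing the Pfaffian Cauchy--Binet identity in whatever generality the principle needs; once that is in hand, the present theorem falls out by bookkeeping. A conceptual point worth flagging is that no $G$-compatibility hypothesis on $\mathbf{a}$ and $B$ appears, in contrast with Stembridge's original theorem: the signed loop-erased-walk sums $\tilde W_k$ and $\tilde Q_k$ already absorb the cancellations that compatibility was previously invoked to enforce, so the Pfaffian identity holds without any planarity or ordering restriction on $\mathbf{a}$.
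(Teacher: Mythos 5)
Your proposal is correct and matches the paper's argument essentially verbatim: the paper also deduces \cref{fomin-stembridge} by instantiating the determinant-to-Pfaffian principle with $\tilde C_k=\tilde W_k$ via Fomin's theorem, and its proof of \cref{main-thm} uses exactly the Ishikawa--Wakayama minor summation formula with the sign matrix $M_{bb'}=\sgn(b'-b)$, the computation $(DM\transpose{D})_{ij}=\tilde Q_2(a_i,a_j)$, and the fact that the all-ones skew-symmetric matrix has Pfaffian $1$. Your closing remark about dropping the compatibility hypothesis is also made in the paper.
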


Note that in the case of acyclic graphs, all walks are paths and this theorem reduces to a stronger form of Stembridge's theorem where we do not have to assume $A$ and $B$ are compatible.

We will deduce \cref{fomin-stembridge} from \cref{fomin} as a consequence of the \emph{determinant-to-Pfaffian principle} in the next section.

\section{The determinant-to-Pfaffian principle}\label{main-section}

\begin{theorem}[The determinant-to-Pfaffian principle]\label{main-thm}
 Let $A$ and $B$ be sets with $B$ finite, $R$ a commutative ring, and let $\{\tilde C_k: A^k\times B^k\to R\}_{k\in\N}$ be a family of functions that satisfies the determinant relation
 \begin{equation}\label{determinant-relation}
 \tilde C_k(\mathbf{a},\mathbf{b}) = \det\bigl(\tilde C_1(a_i, b_j)\bigr)_{i,j=1}^k
 \end{equation}
 for all $k\in\N$ and for all $\mathbf{a}=(a_1,\dots,a_k)$ in $A^k$ and $\mathbf{b}=(b_1,\dots,b_k)$ in $B^k$.
 
 Fix an ordering of $B$ and define a family of functions $\{\tilde R_{k}:A^{k}\to R\}_{k\in\N}$ by
 \[\tilde R_{k}(\mathbf{a}) = \sum_{\substack{\mathbf{b}=(b_1,\dots,b_{k})\in B^{k}:\\b_1<b_2<\dots<b_{k}}}\tilde C_{k}(\mathbf{a},\mathbf{b}).\]
 Then the family $\{\tilde R_{k}\}_{k\in\N}$ satisfies the Pfaffian relation
 \begin{equation}\label{pfaffian-relation}
 \tilde R_{k}(\mathbf{a}) = \Pf\bigl(\tilde R_2(a_i,a_j)\bigr)_{i,j=1}^{k}
 \end{equation}
 for all even $k$ and for all $\mathbf{a}=(a_1,\dots,a_{k})$ in $A^{k}$.
\end{theorem}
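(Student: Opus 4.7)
The plan is to reduce both sides of \eqref{pfaffian-relation} to sums involving only $\tilde C_1$, via \eqref{determinant-relation}, and then invoke a classical Pfaffian minor-summation identity. Applying \eqref{determinant-relation} yields
\[
\tilde R_k(\mathbf{a}) = \sum_{b_1 < \dots < b_k \text{ in } B} \det\bigl(\tilde C_1(a_i, b_j)\bigr)_{i,j=1}^k,
\]
and, for $k = 2$,
\[
\tilde R_2(a_i, a_j) = \sum_{b < b' \text{ in } B}\bigl[\tilde C_1(a_i, b)\tilde C_1(a_j, b') - \tilde C_1(a_i, b')\tilde C_1(a_j, b)\bigr].
\]

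Now let $M$ be the $k \times |B|$ matrix with entries $M_{i,b} = \tilde C_1(a_i, b)$, and let $T$ be the $|B| \times |B|$ skew-symmetric matrix with $T_{bb'} = \sgn(b' - b)$, i.e., with $+1$ strictly above the diagonal and $-1$ strictly below. A direct computation shows $(MTM^\top)_{i,j} = \tilde R_2(a_i, a_j)$, so the right-hand side of \eqref{pfaffian-relation} equals $\Pf(MTM^\top)$. The proof is then completed by the Pfaffian analogue of the Cauchy--Binet formula,
\[
\Pf(MTM^\top) = \sum_{I \subseteq B,\, |I|=k} \Pf(T_I)\,\det(M_I),
\]
combined with the observation that $\Pf(T_I) = 1$ for every $k$-element $I \subseteq B$, since $T_I$ is again a ``$+1$ above the diagonal'' skew-symmetric matrix. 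Substituting, the right-hand side collapses to $\sum_I \det(M_I)$, which matches the expression for $\tilde R_k(\mathbf{a})$ derived above.

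The main obstacle is justifying the Pfaffian Cauchy--Binet identity. It can be cited (it is a special case of the Ishikawa--Wakayama minor-summation formula, itself generalizing a classical identity of de Bruijn) or proved from scratch by induction on $k$: expand $\Pf(MTM^\top)$ along its first row using the Pfaffian row-expansion, apply the induction hypothesis to the resulting sub-Pfaffians, and compare with the Laplace expansion of $\sum_I \det(M_I)$ along the first row of each minor. The auxiliary fact $\Pf(T_I) = 1$ is simpler still: the same row-expansion gives $\Pf(T_I) = \sum_{j=2}^{k}(-1)^j \Pf(T_I')$ where $T_I'$ is again an all-ones-above-diagonal matrix of size $k-2$, and induction yields $\Pf(T_I) = \sum_{j=2}^k (-1)^j = 1$ since $k$ is even. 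Once these ingredients are in hand, the theorem follows by direct substitution.
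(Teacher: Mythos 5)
Your proof is correct and follows essentially the same route as the paper: you form the same two matrices (the $k\times|B|$ matrix of $\tilde C_1$ values and the skew-symmetric all-ones-above-diagonal matrix), verify that their product has entries $\tilde R_2(a_i,a_j)$, and apply the Ishikawa--Wakayama minor summation formula together with the fact that each principal Pfaffian of the sign matrix equals $1$, proved by the same recursive expansion. No substantive differences from the paper's argument.
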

%

\begin{example}[Alternate proof of Stembridge's theorem]
 Let $G$ be a weighted acyclic directed graph, and let $A$ and $B$ be $G$-compatible ordered subsets of $\V(G)$. If we let $\tilde C_k=\tilde P_k = P_k$, we find that the determinant relation \eqref{determinant-relation} holds by Lindstrom's lemma. Therefore defining $\tilde R_k: A^k\to R$ as 
  \[\tilde R_{k}(\mathbf{a}) = \sum_{\substack{\mathbf{b}=(b_1,\dots,b_{k})\in B^{k}:\\b_1<b_2<\dots<b_{k}}}\tilde C_{k}(\mathbf{a},\mathbf{b}),\]
  we find that if $a_1<\dots<a_k$ then $\tilde R_k(a_1,\dots,a_k) = Q_k(a_1,\dots,a_k)$. So for $k$ even, the Pfaffian relation
  \[Q_k(a_1,\dots,a_k) = \Pf\bigl[Q_2(a_i,a_j)\bigr]_{1\leq i<j\leq k}\]
  holds, as claimed in Stembridge's theorem.
\end{example}

\begin{example}[Proof of \cref{fomin-stembridge}]
If $G$ is an arbitrary weighted directed graph, with vertex subsets $A,B\subseteq \V(G)$ with $B$ finite, then the determinant relation \eqref{determinant-relation} holds with $\tilde C_k = \tilde W_k$ by Fomin's theorem, and the resulting functions $\tilde R_k$ that satisfy the Pfaffian relation \eqref{pfaffian-relation} are exactly the $\tilde Q_k$, as claimed in \cref{fomin-stembridge}.
\end{example}

In \cref{undirected,flows}, we will have two more example applications of the determinant-to-Pfaffian principle, to groves on weighted undirected graphs and to alternating flows on planar circular networks. For the remainder of this section we focus on proving \cref{main-thm}. The proof uses the following two lemmas:

\begin{lemma}[Theorem 3.2 of \cite{Ishikawa-Wakayama}]
 Let $M$ be a skew-symmetric $m\times m$ matrix, and let $D$ be a $k\times m$ matrix with $k$ even. Then
 \[\Pf(DM\transpose{D}) = \sum_{1\leq j_1<\dots<j_k\leq m}\Pf\bigl(M_{\{j_1,\dots,j_k\}}\bigr)\det\bigl(D_{\{1,\dots,k\}}^{\{j_1,\dots,j_k\}}\bigr),\]
 where for subsets $I\subset\set{k}$ and $J\subset\set{m}$ we use the notation $D_I^J$ for the submatrix of $D$ consisting of rows indexed by $I$ and columns indexed by $J$, and by $M_J$ we mean the skew-symmetric principal submatrix $M_J^J$.
\end{lemma}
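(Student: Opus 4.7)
The plan is to give a coordinate-free proof using the exterior algebra interpretation of the Pfaffian. Identify the skew-symmetric matrix $M$ with the bivector
\[\omega_M = \sum_{1\le p<q\le m} M_{pq}\, e_p \wedge e_q \in \Lambda^2 R^m,\]
and recall the standard characterization: for any skew-symmetric matrix $A$ of even size $2n$, the Pfaffian is the unique scalar satisfying $\omega_A^{\wedge n} = n!\, \Pf(A)\, e_1\wedge\dots\wedge e_{2n}$. The matrix $D$ represents a linear map $R^m \to R^k$, inducing $\Lambda^2 D \colon \Lambda^2 R^m \to \Lambda^2 R^k$; a short direct calculation exploiting the antisymmetry of $M$ shows that $\Lambda^2 D(\omega_M)$ is precisely the bivector associated to $DM\transpose{D}$.

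Next, expand $\omega_M^{\wedge k/2}$ and group terms by the underlying $k$-subset $J = \{j_1 < \dots < j_k\} \subseteq \{1,\dots,m\}$ of basis indices that appears. The same argument that establishes the Pfaffian characterization above, applied independently to each $J$, yields
\[\omega_M^{\wedge k/2} = (k/2)! \!\!\sum_{\substack{J=\{j_1<\dots<j_k\}\\ \subseteq\{1,\dots,m\}}} \Pf(M_J)\, e_{j_1}\wedge\dots\wedge e_{j_k}.\]
The content of this identity is that the $(k/2)!$ reorderings of the pairs in a given perfect matching of $J$ all produce the same signed wedge, so the ordered-pair sum collapses into $(k/2)!\, \Pf(M_J)$ for each fixed $J$.

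Finally, apply $\Lambda^k D$ to both sides. Because $\Lambda^\bullet D$ is a graded ring homomorphism, $\Lambda^k D(\omega_M^{\wedge k/2}) = (\Lambda^2 D(\omega_M))^{\wedge k/2} = \omega_{DM\transpose{D}}^{\wedge k/2}$. On the other hand, a wedge of $k$ vectors in $R^k$ is a determinant, so
\[\Lambda^k D(e_{j_1}\wedge\dots\wedge e_{j_k}) = D(e_{j_1})\wedge\dots\wedge D(e_{j_k}) = \det\bigl(D_{\{1,\dots,k\}}^{\{j_1,\dots,j_k\}}\bigr)\, f_1\wedge\dots\wedge f_k.\]
Dividing by $(k/2)!$ and reading off the coefficient of $f_1\wedge\dots\wedge f_k$ via the Pfaffian characterization applied this time to $DM\transpose{D}$ produces the desired formula.

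The main obstacle is the sign bookkeeping in the partial expansion of $\omega_M^{\wedge k/2}$: verifying that the contribution of every ordering of the pairs in a fixed perfect matching of $J$ has the same sign, so that exactly $(k/2)!$ copies combine to yield $\Pf(M_J)$. A more elementary alternative, avoiding exterior algebra entirely, is to expand $\Pf(DM\transpose{D})$ directly as a signed sum over perfect matchings of $\{1,\dots,k\}$, substitute $(DM\transpose{D})_{ab} = \sum_{p,q} D_{ap} M_{pq} D_{bq}$, group the result by the multi-index $(p_1,\dots,p_k) \in \{1,\dots,m\}^k$, use antisymmetry of $M$ together with a swapping involution to cancel all multi-indices with a repeated entry, and sort each surviving multi-index into increasing order to recognize $\det(D^J)\,\Pf(M_J)$; both routes reach the same identity, but the exterior-algebra proof is cleaner once the basic bivector language is set up.
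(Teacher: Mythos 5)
The paper does not prove this lemma at all: it is imported verbatim as Theorem 3.2 of Ishikawa--Wakayama and used as a black box in the proof of the determinant-to-Pfaffian principle. Your proposal therefore supplies an argument where the paper supplies only a citation, and the argument you give is the standard exterior-algebra proof of the minor summation formula; it is essentially correct. The three key identifications all check out: $\Lambda^2 D(\omega_M)=\omega_{DM\transpose{D}}$ follows from $(DM\transpose{D})_{ab}=\sum_{p<q}M_{pq}(D_{ap}D_{bq}-D_{aq}D_{bp})$; the partial expansion $\omega_M^{\wedge k/2}=(k/2)!\sum_J\Pf(M_J)\,e_{j_1}\wedge\dots\wedge e_{j_k}$ holds because reordering the pairs of a fixed matching permutes blocks of length two (an even permutation, hence no sign change), which is exactly the bookkeeping you flag; and $\Lambda^k D(e_{j_1}\wedge\dots\wedge e_{j_k})=\det\bigl(D_{\{1,\dots,k\}}^{\{j_1,\dots,j_k\}}\bigr)f_1\wedge\dots\wedge f_k$ since $\Lambda^k R^k$ has rank one. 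The one caveat worth making explicit: your final step divides by $(k/2)!$, which is not legitimate over an arbitrary commutative ring $R$ (the setting of \cref{main-thm}). The standard repair is to observe that both sides of the identity are polynomials with integer coefficients in the entries of $M$ and $D$, prove the identity over $\Z[M_{pq},D_{ip}]$ where $(k/2)!$ is a non-zero-divisor, and then specialize; with that sentence added the proof is complete. Your sketched elementary alternative (expand over matchings, cancel repeated multi-indices by a swapping involution, sort) is also viable and avoids the characteristic issue entirely, at the cost of heavier sign analysis.
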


\begin{lemma}\label{basic-pfaffian}
 For each natural number $n$, let $M_{n}$ be the $n\times n$ skew-symmetric matrix with each above-diagonal entry equal to $1$.
 Then for all $n$, we have $\Pf(M_{2n}) = 1$.
\end{lemma}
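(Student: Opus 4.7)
The plan is a short induction on $n$ using the standard row-expansion of the Pfaffian. Recall that for any skew-symmetric $2n\times 2n$ matrix $A = (a_{ij})$,
\[\Pf(A) = \sum_{j=2}^{2n}(-1)^{j}\,a_{1j}\,\Pf\bigl(A^{\hat 1,\hat j}\bigr),\]
where $A^{\hat 1,\hat j}$ denotes the $(2n-2)\times(2n-2)$ principal submatrix obtained by deleting rows and columns $1$ and $j$. This is the defining recursion for the Pfaffian and requires no further setup.

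Applied to $A = M_{2n}$, the structure of the matrix makes the expansion collapse almost immediately. Two observations do all the work: first, every above-diagonal entry $a_{1j}$ equals $1$; second, deleting row/column $1$ together with row/column $j$ from $M_{2n}$ produces another matrix with all above-diagonal entries equal to $1$, namely $M_{2n-2}$. Substituting,
\[\Pf(M_{2n}) = \Pf(M_{2n-2})\cdot\sum_{j=2}^{2n}(-1)^{j} = \Pf(M_{2n-2}),\]
where the alternating sum equals $1$ because it has $2n-1$ terms starting at $(-1)^{2}=+1$ and ending at $(-1)^{2n}=+1$.

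The base case $n = 1$ is immediate from $M_{2} = \bigl(\begin{smallmatrix} 0 & 1 \\ -1 & 0\end{smallmatrix}\bigr)$, which has Pfaffian $1$. Induction then yields $\Pf(M_{2n}) = 1$ for every $n\geq 1$. Honestly, there is no genuine obstacle here: the lemma amounts to the combinatorial statement that the signed count of perfect matchings of $\{1,\dots,2n\}$, with the Pfaffian sign convention, equals $+1$, and the row-expansion recursion makes this transparent. An alternative proof route would be to compute $\det(M_{2n})$ directly and extract a square root, but this is heavier than the one-line recursion above.
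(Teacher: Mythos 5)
Your proof is correct and follows essentially the same route as the paper: expansion of the Pfaffian along the first row, the observation that the relevant minor is again $M_{2n-2}$, and the evaluation of the alternating sum $\sum_{j=2}^{2n}(-1)^j=1$. The only cosmetic difference is that the paper anchors the induction at $\Pf(M_0)=1$ rather than at $M_2$.
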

\begin{proof}
 This follows immediately by induction on $n$ from the recursive formula for the Pfaffian of a $2n\times 2n$ matrix: The base case is $\Pf(M_0)=1$, and for larger $n$, we have
 \begin{align*}
 \Pf(M_{2n}) &= \sum_{j=2}^{2n} (-1)^j a_{1j} \Pf\bigl((M_{2n})_{\{1,\dots,2n\}\setminus\{1,j\}}\bigr)\\
  &= \sum_{j=2}^{2n} (-1)^j (1) \Pf(M_{2n-2}) = \sum_{j=2}^{2n} (-1)^j = 1.\qedhere
 \end{align*}
\end{proof}

\begin{proof}[Proof of \cref{main-thm}]
 Let $\mathbf{a}=(a_1,\dots,a_{k})\in A^{k}$ with $k$ even. We will construct a $k\times k$ matrix which, on the one hand, has $ij$th entry equal to $\tilde R_2(a_i,a_j)$, and on the other hand, has Pfaffian equal to $\tilde R_k(\mathbf{a})$.
 
 Let $D$ be a matrix whose rows are indexed by $\{1,\dots,k\}$ and whose columns are indexed by elements of $B$: the $ib$th entry of $D$ is defined to be
 \[D_{ib} = \tilde C_1(a_i,b).\]
We also define $M$ to be the skew-symmetric matrix whose rows and columns are both indexed by $B$, where
\[M_{bb'} = \begin{cases}\hphantom{-}1&\text{ if }b<b'\\
-1&\text{ if }b>b'\\
\hphantom{-}0 &\text{ if }b=b'.\end{cases}\]
Then the matrix product $DM\transpose{D}$ is skew-symmetric of size $k\times k$, and its $ij$th entry is
\begin{align*}
 (DM\transpose{D})_{ij} &= \sum_{b,b'\in B} \tilde C_1(a_i,b) M_{bb'} \tilde C_1(a_j,b')\\
 &= \sum_{b<b'\in B} \tilde C_1(a_i,b)\tilde C_1(a_j,b') - \tilde C_1(a_i,b')\tilde C_1(a_j,b)\\
 &= \sum_{b<b'\in B}\det\left(\begin{matrix}\tilde C_1(a_i,b) & \tilde C_1(a_i, b')\\ \tilde C_1(a_j,b) & \tilde C_1(a_j,b')\end{matrix}\right)\\
 &= \sum_{b<b'\in B} \tilde C_2((a_i, a_j), (b,b'))\\
 &= \tilde R_2(a_i,a_j).
\end{align*}
All that remains, then, is to show that $\Pf(DM\transpose{D}) = \tilde R_k(\mathbf{a})$. We use Ishikawa and Wakayama's minor summation formula:
\[\Pf(DM\transpose{D}) = \sum_{b_1 < \dots < b_k\in B} \det\bigl(D_{\{1,\dots,k\}}^{\{b_1,\dots,b_k\}}\bigr)\Pf\bigl(M_{\{b_1,\dots,b_k\}}\bigr).\]
Now for any tuple $\mathbf{b}=(b_1,\dots,b_k)$ in $B^k$, the matrix $D_{\{1,\dots,k\}}^{\{b_1,\dots,b_k\}}$ equals $\bigl(\tilde C_1(a_i,b_j)\bigr)_{i,j=1}^k$ and has determinant $\tilde C_k(\mathbf{a},\mathbf{b})$. Also, if $b_1<b_2<\dots,b_k$ then $M_{b_1,\dots,b_k}$ is a matrix in the form described by \cref{basic-pfaffian}, so its Pfaffian is $1$ since $k$ is even. Thus
\[\Pf(DM\transpose{D}) = \sum_{\substack{\mathbf{b} = (b_1,\dots,b_k)\in B^k:\\b_1 < \dots < b_k}} \tilde C_k(\mathbf{a},\mathbf{b}) = \tilde R_k(\mathbf{a}).\qedhere\]
\end{proof}

\section{Groves on undirected graphs-with-boundary}\label{undirected}
 For another application of the determinant-to-Pfaffian principle, let $G$ be a finite and weighted \emph{un}directed graph, together with a designated partition of the vertices $\V(G) = \intr V\sqcup \bd V$ into \emph{interior vertices} and \emph{boundary vertices}; this makes $G$ a \emph{graph-with-boundary}. We also assume that every connected component of $G$ contains at least one boundary vertex. A \emph{grove} is a spanning forest for $G$ such that every component tree contains at least one boundary vertex; every grove induces a partition on $\bd V$ based on which boundary vertices are in the same trees.
 We will denote the set of groves inducing the singleton partition on $\bd V$ by $\mathrm{Tree}(\bd V)$; see \cref{tree}.
 
 \begin{figure}[h]
 \[\begin{tikzpicture}[scale=1.5, thick]
 \vertex{a1}{(0,3)}{, fill=black}
 \vertex{a2}{(0,2)}{, fill=black}
 \vertex{a3}{(0,1)}{, fill=black}
 \vertex{b1}{(3,3)}{, fill=black}
 \vertex{b2}{(3,2)}{, fill=black}
 \vertex{b3}{(3,1)}{, fill=black}
 \vertex{i1}{(1.5,3.5)}{}
 \vertex{i2}{(.8,2.8)}{}
 \vertex{i3}{(1.8,2.9)}{}
 \vertex{i4}{(1.3,2.3)}{}
 \vertex{i5}{(1,1.8)}{}
 \vertex{i6}{(2,2)}{}
 \vertex{i7}{(1.5,1.3)}{}
 \vertex{i8}{(1.7,.6)}{}
 
 \edge[lightgray]{a1}{i1}
 \edge[lightgray]{i1}{b1}
 \edge{a1}{i2}
 \edge{i1}{i2}
 \edge[lightgray]{i1}{i3}
 \edge[lightgray]{i3}{b1}
 \edge[lightgray]{a2}{i2}
 \edge{i2}{i3}
 \edge[lightgray]{i3}{i4}
 \edge[lightgray]{i2}{i4}
 \edge{a2}{i5}
 \edge[lightgray]{i5}{i4}
 \edge{i4}{i6}
 \edge{i6}{b1}
 \edge[lightgray]{i6}{b2}
 \edge[lightgray]{i5}{i6}
 \edge[lightgray]{i6}{i7}
 \edge[lightgray]{a2}{a3}
 \edge[lightgray]{a3}{i5}
 \edge[lightgray]{i5}{i7}
 \edge[lightgray]{a3}{i8}
 \edge[lightgray]{i7}{i8}
 \edge{i8}{b3}
 \edge{i7}{b3}
 \edge[lightgray]{b2}{b3}
\end{tikzpicture}
\]
\caption{A grove in $\mathrm{Tree}(\bd V)$. (Vertices in $\bd V$ are colored black.)}
\label{tree}
\end{figure}
Furthermore, given two disjoint tuples of boundary vertices $\mathbf{a}=(a_1,\dots,a_k)$ and $\mathbf{b}=(b_1,\dots,b_k)$, let $\mathrm{Grove}(\mathbf{a},\mathbf{b})$ be the set of groves inducing the partition 
  with parts $\{a_i,b_i\}$ for $1\leq i\leq k$, and singletons for all other boundary vertices (see \cref{groves}). (If either $\mathbf{a}$ or $\mathbf{b}$ contains repeated vertices, we define $\mathrm{Grove}_k(\mathbf{a},\mathbf{b})=\emp$.)

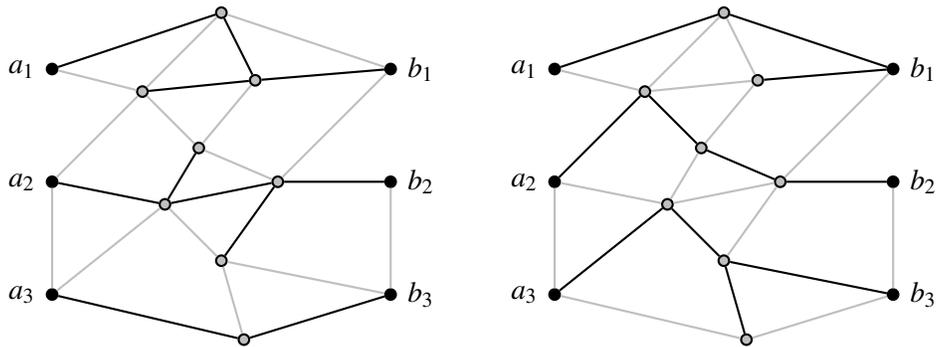
\begin{figure}[h]
 \[\begin{tikzpicture}[scale=1.5, thick]
 \vertex{a1}{(0,3)}{180:$a_1$, fill=black}
 \vertex{a2}{(0,2)}{180:$a_2$, fill=black}
 \vertex{a3}{(0,1)}{180:$a_3$, fill=black}
 \vertex{b1}{(3,3)}{0:$b_1$, fill=black}
 \vertex{b2}{(3,2)}{0:$b_2$, fill=black}
 \vertex{b3}{(3,1)}{0:$b_3$, fill=black}
 \vertex{i1}{(1.5,3.5)}{}
 \vertex{i2}{(.8,2.8)}{}
 \vertex{i3}{(1.8,2.9)}{}
 \vertex{i4}{(1.3,2.3)}{}
 \vertex{i5}{(1,1.8)}{}
 \vertex{i6}{(2,2)}{}
 \vertex{i7}{(1.5,1.3)}{}
 \vertex{i8}{(1.7,.6)}{}
 
 \edge{a1}{i1}
 \edge[lightgray]{i1}{b1}
 \edge[lightgray]{a1}{i2}
 \edge[lightgray]{i1}{i2}
 \edge{i1}{i3}
 \edge{i3}{b1}
 \edge[lightgray]{a2}{i2}
 \edge{i2}{i3}
 \edge[lightgray]{i3}{i4}
 \edge[lightgray]{i2}{i4}
 \edge{a2}{i5}
 \edge{i5}{i4}
 \edge[lightgray]{i4}{i6}
 \edge[lightgray]{i6}{b1}
 \edge{i6}{b2}
 \edge{i5}{i6}
 \edge{i6}{i7}
 \edge[lightgray]{a2}{a3}
 \edge[lightgray]{a3}{i5}
 \edge[lightgray]{i5}{i7}
 \edge{a3}{i8}
 \edge[lightgray]{i7}{i8}
 \edge{i8}{b3}
 \edge[lightgray]{i7}{b3}
 \edge[lightgray]{b2}{b3}
\end{tikzpicture}
\qquad \begin{tikzpicture}[scale=1.5, thick]
 \vertex{a1}{(0,3)}{180:$a_1$, fill=black}
 \vertex{a2}{(0,2)}{180:$a_2$, fill=black}
 \vertex{a3}{(0,1)}{180:$a_3$, fill=black}
 \vertex{b1}{(3,3)}{0:$b_1$, fill=black}
 \vertex{b2}{(3,2)}{0:$b_2$, fill=black}
 \vertex{b3}{(3,1)}{0:$b_3$, fill=black}
 \vertex{i1}{(1.5,3.5)}{}
 \vertex{i2}{(.8,2.8)}{}
 \vertex{i3}{(1.8,2.9)}{}
 \vertex{i4}{(1.3,2.3)}{}
 \vertex{i5}{(1,1.8)}{}
 \vertex{i6}{(2,2)}{}
 \vertex{i7}{(1.5,1.3)}{}
 \vertex{i8}{(1.7,.6)}{}
 
 \edge{a1}{i1}
 \edge{i1}{b1}
 \edge[lightgray]{a1}{i2}
 \edge[lightgray]{i1}{i2}
 \edge[lightgray]{i1}{i3}
 \edge{i3}{b1}
 \edge{a2}{i2}
 \edge[lightgray]{i2}{i3}
 \edge[lightgray]{i3}{i4}
 \edge{i2}{i4}
 \edge[lightgray]{a2}{i5}
 \edge[lightgray]{i5}{i4}
 \edge{i4}{i6}
 \edge[lightgray]{i6}{b1}
 \edge{i6}{b2}
 \edge[lightgray]{i5}{i6}
 \edge[lightgray]{i6}{i7}
 \edge[lightgray]{a2}{a3}
 \edge{a3}{i5}
 \edge{i5}{i7}
 \edge[lightgray]{a3}{i8}
 \edge{i7}{i8}
 \edge[lightgray]{i8}{b3}
 \edge{i7}{b3}
 \edge[lightgray]{b2}{b3}
\end{tikzpicture}
\]
\caption{Two groves in $\mathrm{Grove}(\mathbf{a},\mathbf{b})$.}
\label{groves}
\end{figure}

Then define
\begin{align*}
Z_{\bd V} &\coloneqq \sum_{g\in\mathrm{Tree}(\bd V)}\wt(g)\\
G_k(\mathbf{a},\mathbf{b}) &\coloneqq \frac{1}{Z_{\bd V}} \sum_{g\in\mathrm{Grove}_k(\mathbf{a},\mathbf{b})}\wt(g)\\
\tilde G_k(\mathbf{a},\mathbf{b}) &\coloneqq \sum_{\sigma\in\symgroup{k}}\sgn(\sigma) G_k(\mathbf{a},\mathbf{b}_\sigma),
\end{align*}
  where the weight of a grove is the product of the weights of its edges. (The sum $Z_{\bd V}$ must therefore be invertible in the ambient commutative ring $R$ we are using. One can either work in $\mathbb{R}$ and assume all the edge weights are positive, or we can simply work in the field of rational functions $\Q(e : e\in \Ed(G))$ where each edge is weighted by its own transcendental variable.)

\begin{theorem}\label{grove-determinant}
 For any disjoint subsets $A,B$ of $\bd V$, the family of functions $\{\tilde G_k : A^k\times B^k\to R\}_{k\in\N}$ satisfies the determinant relation \eqref{determinant-relation}. In other words, for all disjoint tuples of boundary vertices $\mathbf{a}=(a_1,\dots,a_k)$ and $\mathbf{b}=(b_1,\dots,b_k)$, we have $\tilde G_k(\mathbf{a},\mathbf{b}) = \det\bigl(\tilde G_1(a_i,b_j)\bigr)_{i,j=1}^k$.
\end{theorem}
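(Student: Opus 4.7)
The plan is to translate both sides of the identity into minors of the weighted Laplacian of $G$ and reduce the problem to Jacobi's identity on complementary minors of an adjugate matrix. Let $L$ be the weighted Laplacian of $G$, with $L_{vv}=\sum_{e\ni v}\wt(e)$ and $L_{vw}=-\sum_{e=\{v,w\}}\wt(e)$ for $v\neq w$. Since every connected component of $G$ contains a boundary vertex, the principal submatrix $L_0 := L|_{\intr V,\intr V}$ is invertible, and Kirchhoff's matrix-tree theorem applied to the groves in $\mathrm{Tree}(\bd V)$ gives $Z_{\bd V} = \det(L_0)$.

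Next, using Chaiken's all-minors generalization of the matrix-tree theorem, I would express the signed grove sum
\[\sum_{\sigma\in\symgroup{k}}\sgn(\sigma)\sum_{g \in \mathrm{Grove}_k(\mathbf{a},\mathbf{b}_\sigma)}\wt(g)\]
as a single Laplacian minor. The technical point is that the raw Chaiken sum involves all bijections on a larger index set -- including ones that fail to fix the ``inactive'' boundary vertices $S := \bd V \setminus (\mathbf{a}\cup\mathbf{b})$ pointwise -- so one must restrict to the standard bijections by either a Schur-complement reduction with respect to $S$, or an auxiliary ``pinning'' construction that forces each vertex of $S$ to be its own tree root. Under the correct index choice, the $\sigma$-dependent signs from Chaiken's formula cancel with the $\sgn(\sigma)$ weights, and the sum over $\sigma$ collapses to a single minor. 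Dividing by $Z_{\bd V}$ then exhibits $\tilde G_k(\mathbf{a},\mathbf{b})$ as a $k \times k$ minor of a ``response matrix'' that naturally extends $L_0^{-1}$ to the full boundary, with the $k=1$ case identifying $\tilde G_1(a,b)$ as an entry of that matrix.

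The proof concludes with Jacobi's identity on complementary minors of an adjugate: for an invertible matrix $M$ and index sets $I, J$ of size $k$, the $k\times k$ minor of $\mathrm{adj}(M)$ on rows $I$ and columns $J$ equals $\det(M)^{k-1}$ times the complementary $(|V|-k) \times (|V|-k)$ minor of $M$, up to an explicit sign. Applied to our response matrix, this identity converts the $k \times k$ minor from the previous step into $\det\bigl(\tilde G_1(a_i,b_j)\bigr)_{i,j=1}^k$, as claimed.

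The main obstacle is careful sign and scalar bookkeeping: Chaiken's theorem, the Schur-complement or pinning reduction, and Jacobi's identity each contribute signs and factors of $\det(L_0)$ that must combine exactly to give the stated identity without residual factors. A secondary subtlety is verifying the $k=1$ identification of $\tilde G_1(a,b)$ with the correct entry of the response matrix; after that, Jacobi's identity handles the general case as a purely algebraic black box.
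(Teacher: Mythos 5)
Your argument and the paper's converge on the same key identification---$\tilde G_k(\mathbf{a},\mathbf{b}) = (-1)^k\det(\Lambda_{a_ib_j})_{i,j=1}^k$, where $\Lambda = K_{\bd V}^{\bd V} - K_{\intr V}^{\bd V}(K_{\intr V}^{\intr V})^{-1}K_{\bd V}^{\intr V}$ is the response matrix---but reach it by different intermediate lemmas. The paper decomposes each grove into a disjoint path family $P$ from $\mathbf{a}$ to $\mathbf{b}_\sigma$ plus a forest rooted at $\bd V\cup P$, and then quotes Curtis--Morrow's Lemma 3.12, which gives $\det(\Lambda_{a_ib_j})$ as exactly the resulting signed sum; you go through Chaiken's all-minors theorem plus a Jacobi/Schur-complement identity. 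Your route works, and the difficulty you flag about bijections moving $S=\bd V\setminus(\mathbf{a}\cup\mathbf{b})$ in fact dissolves with the right index choice: in the minor of $K$ obtained by deleting rows $S\cup\{b_1,\dots,b_k\}$ and columns $S\cup\{a_1,\dots,a_k\}$, Chaiken's constraint that each tree contain exactly one deleted-row vertex and exactly one deleted-column vertex already forces every $s\in S$ to be a singleton root (a tree containing both $s$ and some $a_i$ would contain two deleted-column vertices), so no pinning is needed and that minor equals $\pm\det(K_{\intr V}^{\intr V})\,\tilde G_k(\mathbf{a},\mathbf{b})$ on the nose. The one organizational slip is where you place Jacobi's identity: it is the bridge from this complementary minor of $K$ to the $k\times k$ minor of $\Lambda$ (that is, it \emph{is} your Schur-complement reduction), not a concluding step. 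Once you have $\tilde G_k(\mathbf{a},\mathbf{b}) = \det\bigl((-\Lambda)_{a_ib_j}\bigr)_{i,j=1}^k$ for all $k$, the $k=1$ case identifies $\tilde G_1(a,b) = -\Lambda_{ab}$ and the determinant relation is literally the statement that a $k\times k$ minor is the determinant of the matrix of its entries---no further identity is needed. Net comparison: the paper's proof is shorter by outsourcing the minor formula to Curtis--Morrow, while yours is more self-contained (given Chaiken) and makes explicit why the inactive boundary vertices take care of themselves; the sign and scalar bookkeeping you defer is real but routine.
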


The proof reduces to a formula due to Curtis and Morrow, but before we prove \cref{grove-determinant}, we need a lemma which one can regard as a generalization of Kirchhoff's matrix-tree theorem. For $G$ a finite weighted undirected graph, define its \emph{Kirchhoff matrix} $K$ to be the symmetric matrix with rows and columns indexed by the vertices of $G$ whose $ij$th entry for $i\neq j$ is minus the sum of the weights of edges between $i$ and $j$, and whose $ii$th entry is the sum of all weights of edges from $i$ to any other vertex.

\begin{lemma}
 Let $G$ be a finite weighted undirected graph with Kirchhoff matrix $K$. For any choice of partition $\V(G) = \bd V \sqcup \intr V$ into boundary and interior vertices, we have
 \[Z_{\bd V} = \det(K_{\intr V}^{\intr V}),\]
 the determinant of the principal proper submatrix of $K$ indexed by the interior vertices of $G$.
\end{lemma}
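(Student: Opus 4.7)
The plan is to reduce the lemma to the classical Kirchhoff matrix-tree theorem by contracting all boundary vertices into a single new vertex. Let $G'$ denote the weighted multigraph obtained from $G$ by identifying all vertices of $\bd V$ to a single new vertex $v_0$, keeping parallel edges with their original weights and discarding any self-loops that arise from edges between two boundary vertices (such loops cannot appear in any spanning tree).

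First I would verify that the Kirchhoff matrix $K'$ of $G'$ satisfies $(K')_{\intr V}^{\intr V} = K_{\intr V}^{\intr V}$. The off-diagonal entries agree because the edges between two distinct interior vertices are unchanged by the contraction. For a diagonal entry $(K')_{ii}$ with $i \in \intr V$, the weighted degree of $i$ in $G'$ equals its weighted degree in $G$, since each edge from $i$ to a boundary vertex becomes a parallel edge from $i$ to $v_0$ of the same weight.

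Next I would set up a weight-preserving bijection between spanning trees of $G'$ and groves in $\mathrm{Tree}(\bd V)$. Given a spanning tree $T$ of $G'$, I would \emph{un-contract} by reattaching each edge of $T$ incident to $v_0$ to its original boundary endpoint in $G$; the resulting subgraph $U(T)$ of $G$ has $|\intr V|$ edges on $|\V(G)|$ vertices. To see that $U(T)$ is a forest, I would observe that any cycle in $U(T)$ would project under the contraction to a nontrivial closed walk in $T$ with no repeated edges, which is impossible in a tree. Hence $U(T)$ has exactly $|\bd V|$ components, each of which contains at least one boundary vertex (otherwise some set of interior vertices would be disconnected from $v_0$ in $T$), and therefore, by counting, exactly one. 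The inverse map, which contracts all boundary vertices of a grove in $\mathrm{Tree}(\bd V)$ back to $v_0$, yields a connected subgraph of $G'$ with $|\intr V|$ edges on $|\intr V|+1$ vertices, hence a spanning tree.

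Finally, I would apply Kirchhoff's matrix-tree theorem to $G'$---which is connected because every connected component of $G$ meets $\bd V$---deleting the row and column indexed by $v_0$, to obtain
\[
\det(K_{\intr V}^{\intr V}) \;=\; \det\bigl((K')_{\intr V}^{\intr V}\bigr) \;=\; \sum_{T} \wt(T) \;=\; Z_{\bd V},
\]
where the middle sum ranges over all spanning trees $T$ of $G'$. The subtlest step will be the bijection---specifically the acyclicity argument for $U(T)$---while the matrix identity and the invocation of Kirchhoff's theorem are routine once $G'$ is in hand.
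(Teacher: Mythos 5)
Your proposal is correct and follows essentially the same route as the paper: contract all boundary vertices to a single vertex, observe that this changes neither $K_{\intr V}^{\intr V}$ nor the spanning-forest/spanning-tree count, and invoke the classical weighted matrix-tree theorem. You simply spell out the weight-preserving bijection and the acyclicity check in more detail than the paper does.
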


\begin{proof}
 This lemma follows easily from the so-called All Minors Matrix-Tree Theorem (see \cite{Chaiken}, for example), but there is also a simple argument based on Kirchhoff's matrix-tree theorem for weighted graphs, which is precisely this lemma in the case of exactly one boundary vertex. For a general number of boundary vertices, form a new weighted graph by gluing all the old boundary vertices into one new boundary vertex. This doesn't change $K_{\intr V}^{\intr V}$, since all the interior vertices have the same patterns of incident edges. Furthermore, it doesn't change $Z_{\bd V}$ either, since a collection of edges forms a spanning forest with one component per boundary vertex if and only if those edges form a spanning tree when the boundary vertices are all identified (see \cref{forest-tree}). Then since the lemma holds for the new graph, it holds for the original graph as well.
\end{proof}

\begin{figure}[h]
\[\begin{tikzpicture}[baseline=-.5cm, thick]
\vertex{a1}{(-.25,.25)}{,fill=black}
\vertex{a2}{(.75,1)}{,fill=black}
\vertex{a3}{(2.5,0)}{,fill=black}
\vertex{i1}{(1.25,0)}{}
\vertex{i2}{(-.5,-1)}{}
\vertex{i3}{(.5,-1)}{}
\vertex{i4}{(1.5,-1)}{}
\vertex{i5}{(2.5,-1.5)}{}
\vertex{i6}{(0,-2)}{}
\vertex{i7}{(1,-2)}{}
\vertex{i8}{(2,-2.5)}{}

\edge[lightgray]{a1}{a2}
\edge{i1}{a2}
\edge{i2}{a1}
\edge{i3}{a1}
\edge{i4}{a3}
\edge[lightgray]{i1}{a3}
\edge[lightgray]{i5}{a3}
\edge[lightgray]{i3}{i1}
\edge[lightgray]{i4}{i1}
\edge[lightgray]{i3}{i2}
\edge[lightgray]{i6}{i2}
\edge{i6}{i3}
\edge[lightgray]{i4}{i3}
\edge{i5}{i4}
\edge[lightgray]{i7}{i4}
\edge{i7}{i5}
\edge{i8}{i5}
\edge[lightgray]{i7}{i6}
\edge[lightgray]{i8}{i7}
\end{tikzpicture}\qquad\rightsquigarrow\qquad
\begin{tikzpicture}[baseline=-.5cm, thick]
\vertex{a1}{(1,1.5)}{,fill=black}
\vertex{a2}{(1,1.5)}{,fill=black}
\vertex{a3}{(1,1.5)}{,fill=black}
\vertex{i1}{(1.25,0)}{}
\vertex{i2}{(-.5,-1)}{}
\vertex{i3}{(.5,-1)}{}
\vertex{i4}{(1.5,-1)}{}
\vertex{i5}{(2.5,-1.5)}{}
\vertex{i6}{(0,-2)}{}
\vertex{i7}{(1,-2)}{}
\vertex{i8}{(2,-2.5)}{}

\draw[lightgray, rounded corners=3pt] (a1) -- (.7,.85) -- (.5,1) -- (a2);
\edge[bend left]{i1}{a2}
\edge[bend left]{i2}{a1}
\edge[bend left=45]{i3}{a1}
\edge[bend right=45]{i4}{a3}
\edge[lightgray, bend right=30]{i1}{a3}
\edge[lightgray, bend right=45]{i5}{a3}
\edge[lightgray]{i3}{i1}
\edge[lightgray]{i4}{i1}
\edge[lightgray]{i3}{i2}
\edge[lightgray]{i6}{i2}
\edge{i6}{i3}
\edge[lightgray]{i4}{i3}
\edge{i5}{i4}
\edge[lightgray]{i7}{i4}
\edge{i7}{i5}
\edge{i8}{i5}
\edge[lightgray]{i7}{i6}
\edge[lightgray]{i8}{i7}
\end{tikzpicture}\]
\caption{A spanning forest with one tree per boundary vertex is the same data as a spanning tree on the graph with all boundary vertices identified.}
\label{forest-tree}
\end{figure}
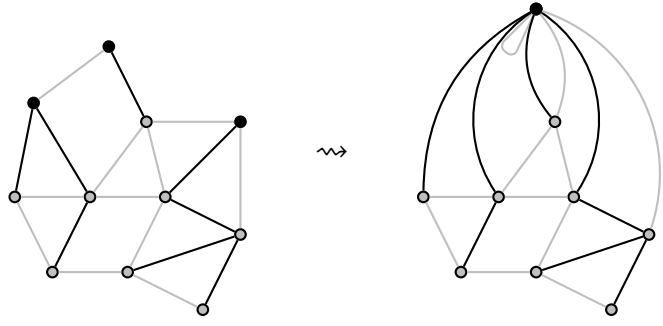

\begin{proof}[Proof of \cref{grove-determinant}]
 We start by rewriting the definition of $G_k(\mathbf{a},\mathbf{b})$ in terms of trees and paths, which will let us reduce to a lemma from \cite{Curtis-Morrow}. Each grove in $\mathrm{Grove}(\mathbf{a},\mathbf{b})$ contains a unique family of (necessarily disjoint) paths $P = \{p_i:a_i\to b_i\}_{i=1}^n$, and the remainder of the grove is a collection of trees each containing exactly one vertex in $P$ or the rest of $\bd V$; we summarize this with the set equation
 \[\mathrm{Grove}_k(\mathbf{a},\mathbf{b}) = \coprod_{P\in\mathrm{Path}_k(\mathrm{a},\mathrm{b})}\ \coprod_{g\in\mathrm{Tree}(\partial V \cup P)}(P\sqcup g).\]
 The weight of such a grove is the product of the weights of the path family and the collection of trees, so we can say that
 \begin{align*}
 G_k(\mathbf{a},\mathbf{b}) &= \frac{1}{Z_{\bd V}}\sum_{P\in\mathrm{Path}_k(\mathbf{a},\mathbf{b})}\  \sum_{g\in \mathrm{Tree}(\bd V\cup P)}\wt(P)\cdot \wt(g)\\
 &= \frac{1}{Z_{\bd V}}\sum_{P\in\mathrm{Path}_k(\mathbf{a},\mathbf{b})}\wt(P)\cdot Z_{\bd V \cup P}\\
 &= \frac{1}{\det(K_{\intr V}^{\intr V})}\sum_{P\in\mathrm{Path}_k(\mathbf{a},\mathbf{b})}\wt(P)\cdot \det(K_{\intr V\setminus P}^{\intr V\setminus P}).
 \end{align*}
 Now by \cite[Lemma 3.12]{Curtis-Morrow}, the matrix $\Lambda \coloneqq K_{\bd V}^{\bd V} - K_{\intr V}^{\bd V}(K_{\intr V}^{\intr V})^{-1}K_{\bd V}^{\intr V}$ has the following minor:
 \begin{align*}\det(\Lambda_{a_ib_j})_{i,j=1}^{k} &= \frac{(-1)^k}{\det(K_{\intr V}^{\intr V})}\sum_{\sigma\in\symgroup{k}} \sgn(\sigma)\sum_{P\in\mathrm{Path}_k(\mathbf{a},\mathbf{b}_\sigma)} \wt(P) \det(K_{\intr V\setminus P}^{\intr V\setminus P}),
 \intertext{which we can now rewrite as}
 &= (-1)^k\sum_{\sigma\in\symgroup{k}}\sgn(\sigma) G_k(\mathbf{a},\mathbf{b}_\sigma)\\
 &= (-1)^k \tilde G_k(\mathbf{a},\mathbf{b}).
\end{align*}
The case $k=1$ therefore tells us that the $a_ib_j$th entry of $\Lambda$ is $-\tilde G_1(a_i,b_j)$, so 
\[\det\bigl(\tilde G_1(a_i,b_j)\bigr)_{i,j=1}^k = \det(-\Lambda_{a_ib_j})_{i,j=1}^k = (-1)^k\det(\Lambda_{a_ib_j})_{i,j=1}^k = \tilde G_k(\mathbf{a},\mathbf{b}).\qedhere\]
\end{proof}

\begin{remark}
 For graphs embedded in a disc with boundary vertices on the boundary circle, Kenyon and Wilson show in \cite{Kenyon-Wilson} that for \emph{any} partition $\tau$ of the boundary vertices, $1/Z_{\bd V}$ times the sum of weights of all groves inducing partition $\tau$ can be expressed as a polynomial in the entries of the matrix $\Lambda$.
 If $\mathbf{a}$ and $\mathbf{b}$ are arranged so that $a_1,\ldots,a_k,b_k,\ldots,b_1$ are in counterclockwise order around the boundary circle, then $\tilde G_k(\mathbf{a},\mathbf{b}) = G_k(\mathbf{a},\mathbf{b})$ because of the planarity assumption, and Kenyon and Wilson's theorem gives us $G_k(\mathbf{a},\mathbf{b}) = (-1)^k\det(\Lambda_{a_ib_j})_{i,j=1}^k$ as a special case.
\end{remark}

Finally, now that we have a sequence of functions $\tilde G_k$ satisfying the determinant relation, we can use the determinant-to-Pfaffian principle to deduce a version of Stembridge's theorem for undirected graphs:

\begin{corollary}\label{grove-pfaffian}
 Given a finite weighted undirected graph $G$ with choice of boundary  $\bd V$, and given a partition of $\bd V$ into two ordered sets $A$ and $B$, define a family of functions $\tilde H_k: A^k\to R$ by
 \[\tilde H_k(\mathbf{a}) = \sum_{\substack{\mathbf{b}=(b_1,\dots,b_{k})\in B^{k}:\\b_1<b_2<\dots<b_{k}}}\tilde G_{k}(\mathbf{a},\mathbf{b}).\]
 Then for even $k$, we have $\tilde H_k(\mathbf{a}) = \Pf\bigl(\tilde H_2(a_i,a_j)\bigr)_{i,j=1}^k$.
\end{corollary}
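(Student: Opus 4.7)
The plan is to invoke the determinant-to-Pfaffian principle (\cref{main-thm}) with the family of functions $\tilde C_k = \tilde G_k$, so that the resulting family $\tilde R_k$ coincides with the $\tilde H_k$ defined in the statement, and the Pfaffian conclusion of \cref{main-thm} becomes exactly the desired identity.

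First I would verify the hypotheses of \cref{main-thm}. The sets $A$ and $B$ form a partition of $\bd V$, so in particular they are disjoint, and $B$ is finite because $G$ is finite; this is the setting in which \cref{grove-determinant} applies. The functions $\tilde G_k : A^k \times B^k \to R$ are defined on all tuples (with $\tilde G_k(\mathbf{a},\mathbf{b})$ vanishing whenever $\mathbf{a}$ or $\mathbf{b}$ has repeated entries, by the convention $\mathrm{Grove}_k(\mathbf{a},\mathbf{b}) = \emp$), and \cref{grove-determinant} tells us that they satisfy the determinant relation $\tilde G_k(\mathbf{a},\mathbf{b}) = \det\bigl(\tilde G_1(a_i,b_j)\bigr)_{i,j=1}^k$, which is exactly equation \eqref{determinant-relation} with $\tilde C_k = \tilde G_k$.

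Next I would observe that the family $\tilde R_k$ produced by the recipe in \cref{main-thm}, namely
\[
\tilde R_k(\mathbf{a}) = \sum_{\substack{\mathbf{b} = (b_1,\dots,b_k) \in B^k :\\ b_1 < b_2 < \dots < b_k}} \tilde G_k(\mathbf{a},\mathbf{b}),
\]
is literally the definition of $\tilde H_k(\mathbf{a})$. Therefore \cref{main-thm} applied to $\{\tilde G_k\}$ immediately yields, for every even $k$ and every $\mathbf{a} \in A^k$,
\[
\tilde H_k(\mathbf{a}) = \Pf\bigl(\tilde H_2(a_i,a_j)\bigr)_{i,j=1}^k,
\]
which is the claim. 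Since all the real work (both the determinant identity from Curtis--Morrow, and the linear-algebraic conversion via the Ishikawa--Wakayama minor summation formula) has already been done in the previous sections, there is no serious obstacle: the proof is simply a bookkeeping check that the hypotheses of the determinant-to-Pfaffian principle are met and that $\tilde R_k = \tilde H_k$.
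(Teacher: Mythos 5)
Your proposal is correct and matches the paper's (implicit) argument exactly: the corollary is stated as an immediate application of \cref{main-thm} with $\tilde C_k = \tilde G_k$, using \cref{grove-determinant} to supply the determinant relation, so that $\tilde R_k = \tilde H_k$. Your extra care in checking that the relation also holds for tuples with repeated entries (where both sides vanish) is a reasonable bookkeeping point that the paper glosses over.
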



\section{Alternating flows on planar circular networks}\label{flows}

As a final application of the determinant-to-Pfaffian principle, we now turn to the work of Kelli Talaska on alternating flows in \cite{Talaska}. In this section, $G$ is a finite planar directed graph equipped with an embedding into the closed unit disc. This embedding gives $G$ the structure of a graph-with-boundary, where the interior vertices $\intr V$ are those embedded into the interior of the disc, and the boundary vertices $\bd V$ are those embedded into the boundary circle. We assume that each boundary vertex is incident to exactly one edge, making it either a source or a sink, and denote the set of source boundary vertices by $A$ and the set of sink boundary vertices by $B$. We fix a total order on $\bd V = A\cup B$ by starting at an arbitrary boundary vertex and continuing clockwise around the boundary circle. We also weight each edge of $G$ by its corresponding formal variable in the field of rational functions $R=\Q(e: e\in\Ed(G))$. We follow Talaska in calling such an embedded weighted directed graph $G$ a \emph{planar circular network}.

An \emph{alternating flow} $f$ on $G$ is a subset of $\Ed(G)$ such that, for each interior vertex $v$, the edges incident to $v$ in $f$ alternate in orientation (toward or away from $v$) under the circular ordering given by the planar embedding of $G$; see \cref{plancircflow}.

Each flow $F$ has three associated quantities:
\begin{itemize}
\item Its \emph{weight} $\wt(f)$, which is the product of the weights of edges in $f$, as usual.
\item Its \emph{collision index} $\theta(f)$, which is the total over all interior vertices $v$ incident to edges in $f$ of $\frac12($the number of edges of $f$ incident to $v) - 1$.
\item Subsets $A'\subset A$ and $B'\subset B$ consisting of those boundary vertices incident to edges in $f$.  We say that $f$ \emph{connects} $A'$ to $B'$. A simple counting argument shows that $|A'|=|B'|$.
\end{itemize}

If $A' = \{a_1,\dots,a_k\}\subset A$ and $B' = \{b_1,\dots,b_k\}\subseteq B$ are sets of $k$ boundary sources and sinks, then we write $\mathrm{Flow}_k(A',B')$ for the set of alternating flows connecting $\A'$ to $B'$.  We also define the set $\mathrm{Cons}(G) = \mathrm{Flow}_0(\emp, \emp)$ to be the set of \emph{conservative} alternating flows, which do not use any edges incident to the boundary of $G$. (This includes the empty flow, which has weight $1$.)

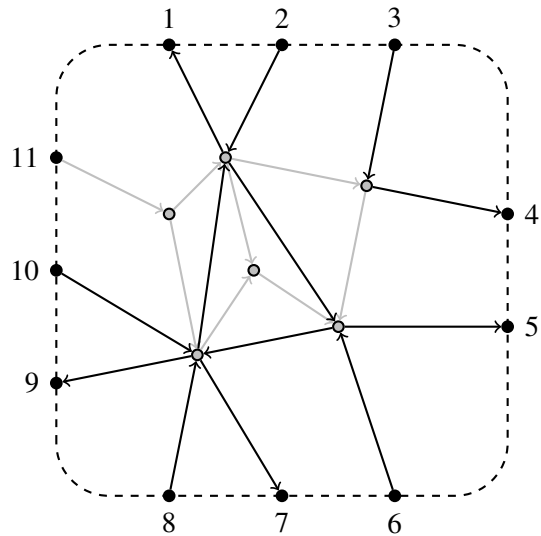
\begin{figure}[h]
\[\begin{tikzpicture}[thick, scale=1.5]
 \draw[rounded corners=20pt, dashed] (0,0) -- (0,4) -- (4,4) -- (4,0) -- cycle;
 \vertex{1}{(1,4)}{90:1, fill=black}
 \vertex{2}{(2,4)}{90:2, fill=black}
 \vertex{3}{(3,4)}{90:3, fill=black}
 \vertex{4}{(4,2.5)}{0:4, fill=black}
 \vertex{5}{(4,1.5)}{0:5, fill=black}
 \vertex{6}{(3,0)}{270:6, fill=black}
 \vertex{7}{(2,0)}{270:7, fill=black}
 \vertex{8}{(1,0)}{270:8, fill=black}
 \vertex{9}{(0,1)}{180:9, fill=black}
 \vertex{10}{(0,2)}{180:10, fill=black}
 \vertex{11}{(0,3)}{180:11, fill=black}
 \vertex{12}{(1.5,3)}{}
 \vertex{13}{(2.75,2.75)}{}
 \vertex{14}{(1,2.5)}{}
 \vertex{15}{(1.25,1.25)}{}
 \vertex{16}{(2.5,1.5)}{}
 \vertex{17}{(1.75,2)}{}
 \arc[lightgray]{11}{14}{}
 \arc[lightgray]{14}{12}{}
 \arc[lightgray]{14}{15}{}
 \arc[lightgray]{12}{13}{}
 \arc[lightgray]{13}{16}{}
 \arc[lightgray]{12}{17}{}
 \arc[lightgray]{15}{17}{}
 \arc[lightgray]{17}{16}{}
 \arc{12}{1}{}
 \arc{2}{12}{}
 \arc{3}{13}{}
 \arc{13}{4}{}
 \arc{16}{5}{}
 \arc{6}{16}{}
 \arc{15}{7}{}
 \arc{8}{15}{}
 \arc{15}{9}{}
 \arc{10}{15}{}
 \arc{15}{12}{}
 \arc{12}{16}{}
 \arc{16}{15}{}
\end{tikzpicture}
\]
\caption{\label{plancircflow}A planar circular network. The dark edges form an alternating flow $f$ connecting $\{2, 3, 6, 8, 10\}$ to $\{1, 4, 5, 7, 9\}$ with $\theta(f)=0+1+1+2=4$.}
\end{figure}

Then we define these quantities:
\begin{align*}
 C &\coloneqq \sum_{f\in\mathrm{Cons}(G)} 2^{\theta(f)} \wt(f)\\
 F_k(A',B') &\coloneqq \frac{1}{C} \sum_{f\in \mathrm{Flow}_k(A',B')} 2^{\theta(f)} \wt(f).
\end{align*}
Finally, we define the quantity $\tilde F_k(\mathbf{a},\mathbf{b})$ for tuples $\mathbf{a}\in A^k$ and $\mathbf{b}\in B^k$. If there are repeated vertices in $\mathbf{a}$ or $\mathbf{b}$, we set $\tilde F_k(\mathbf{a},\mathbf{b})=0$. Otherwise, $\tilde F_k(\mathbf{a},\mathbf{b})$ will equal $F_k(\{a_1,\dots,a_k\},\{b_1,\dots,b_k\})$ up to an overall sign that depends only on the order properties of $\mathbf{a}$ and $\mathbf{b}$. Namely, let $\mathbf{a} \frown (A\setminus\mathbf{a})$ be the concatenation of $\mathbf{a}$ with all the rest of the elements of $A$, in order, and similarly form the tuple $\mathbf{b}\frown (A\setminus\mathbf{a})$. As both are tuples of elements of the totally ordered set $\bd V$, we can count their \emph{inversions}, pairs of entries out of order with respect to the total order on $\bd V$. Then set
\[\sgn(\mathbf{a},\mathbf{b}) \coloneqq (-1)^{\ds\#\text{ of inversions in }\mathbf{a} \frown (A\setminus\mathbf{a})\text{ and }\mathbf{b}\frown (A\setminus\mathbf{a})}\]
and finally
\[
 \tilde F_k(\mathbf{a},\mathbf{b}) \coloneqq \sgn(\mathbf{a},\mathbf{b}) F_k(\{a_1,\dots,a_k\},\{b_1,\dots,b_k\}).
\]

Then the $\tilde F_k$ obey a Lindstr\"om-type determinant relation:

\begin{theorem}\label{flow-determinant}
 Let $G$ be a planar circular network with its sets $A$ and $B$ of boundary sources and sinks, respectively. For each natural number $k$ and tuples $\mathbf{a}=(a_1,\dots,a_k)\in A^k$ and $\mathbf{b}=(b_1,\dots,b_k)\in B^k$, we have
 \[\tilde F_k(\mathbf{a},\mathbf{b}) = \det\bigl(\tilde F_1(a_i,b_j)\bigr)_{i,j=1}^k.\]
\end{theorem}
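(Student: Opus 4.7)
The plan is to deduce the identity from Talaska's determinantal formula for flow polynomials of planar circular networks, with the bulk of the work being careful sign bookkeeping.

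To begin, I would dispose of the degenerate cases: if $\mathbf{a}$ or $\mathbf{b}$ has a repeated entry then $\tilde F_k(\mathbf{a},\mathbf{b})=0$ by definition, and the matrix $\bigl(\tilde F_1(a_i,b_j)\bigr)_{i,j=1}^k$ has a repeated row or column, so its determinant vanishes too. From here on I assume both tuples have distinct entries.

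Next I would check that both sides of the proposed identity are alternating under independent permutations of $\mathbf{a}$ and of $\mathbf{b}$. If $\pi\in\symgroup{k}$ acts on $\mathbf{a}$, then inversions between the first $k$ and the last $|A|-k$ entries of $\mathbf{a}\frown(A\setminus\mathbf{a})$ depend only on the underlying set $\{a_1,\dots,a_k\}$, while inversions among the first $k$ entries change in parity by $\ell(\pi)$ modulo $2$; hence $\sgn(\mathbf{a}_\pi,\mathbf{b})=\sgn(\pi)\sgn(\mathbf{a},\mathbf{b})$, and since $F_k$ depends only on the underlying sets of boundary vertices, $\tilde F_k(\mathbf{a}_\pi,\mathbf{b})=\sgn(\pi)\tilde F_k(\mathbf{a},\mathbf{b})$. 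The determinant picks up the same factor from the row permutation, and the argument for $\mathbf{b}$ is symmetric. It therefore suffices to prove the identity when $a_1<\cdots<a_k$ and $b_1<\cdots<b_k$.

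For that sorted case I would invoke Talaska's main theorem in \cite{Talaska}, which expresses each $k\times k$ minor of the boundary measurement matrix $\bigl(F_1(a,b)\bigr)_{a\in A,b\in B}$ as a signed multiple of $F_k\bigl(\{a_1,\dots,a_k\},\{b_1,\dots,b_k\}\bigr)$, the sign $\eta(\mathbf{a},\mathbf{b})\in\{\pm 1\}$ being determined by the cyclic positions of the chosen sources and sinks around $\bd V$. What remains is to verify that $\eta(\mathbf{a},\mathbf{b})=\sgn(\mathbf{a},\mathbf{b})$ for sorted tuples. I would establish this by expressing each sign as a combinatorial count of crossings between chosen and unchosen boundary vertices along the boundary circle, using $k=1$ as a calibration (where both sides unambiguously equal $F_1(a,b)$) and inducting on $k$ by peeling off one pair $(a_k,b_k)$ at a time.

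The main obstacle is precisely this final sign verification: Talaska's sign is phrased in terms of the cyclic order on $\bd V$ and the alternation of sources and sinks, whereas $\sgn(\mathbf{a},\mathbf{b})$ is a purely linear inversion count of two separate merges. Translating cleanly between these descriptions is the one non-formal step; everything else follows from the definitions and the cited theorem.
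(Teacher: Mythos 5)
Your overall strategy---reduce to sorted tuples via alternation in each argument, then invoke Talaska's minor formula---is close in spirit to the paper's, and your reduction step is correct: $\sgn(\mathbf{a}_\pi,\mathbf{b})=\sgn(\pi)\sgn(\mathbf{a},\mathbf{b})$ does hold, since only the inversions among the first $k$ entries of $\mathbf{a}\frown(A\setminus\mathbf{a})$ change under $\pi$. But there is a genuine gap exactly where you flag ``the one non-formal step'': the sign identification $\eta(\mathbf{a},\mathbf{b})=\sgn(\mathbf{a},\mathbf{b})$ is the entire content of the theorem beyond Talaska's result, and your plan for it (recount both signs as crossings and induct by peeling off pairs) is neither carried out nor obviously workable as stated. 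In particular, your calibration at $k=1$ is off: the entries of the boundary measurement matrix in the sink columns are the \emph{signed} quantities $\tilde F_1(a,b)=\sgn((a),(b))\,F_1(\{a\},\{b\})$, not $F_1(\{a\},\{b\})$, and the sign $\sgn((a),(b))$ is generally nontrivial (it counts inversions of $a$ and of $b$ against the rest of $A$), so the $1\times 1$ case does not ``unambiguously equal $F_1(a,b)$.'' Relatedly, the version of Talaska's theorem actually available here (\cref{talaska}) computes only the \emph{maximal} minors $\det(M_A^{V'})$ with $|V'|=|A|$; a statement about arbitrary $k\times k$ minors of the source-by-sink block, with an explicit sign, is precisely what must be derived rather than quoted.

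The paper closes this gap with a short linear-algebra device (\cref{talaska-corollary}) that you should adopt: pad the $k\times k$ submatrix $(M_{a_ib_j})$ out to a maximal minor by appending the rows indexed by $A\setminus\mathbf{a}$ and the columns of $M$ indexed by the unused sources $A\setminus\mathbf{a}$. Because each source column of $M$ is a standard basis vector, the resulting matrix is block triangular with an identity block, so its determinant is $\det(M_{a_ib_j})_{i,j=1}^k$; on the other hand, sorting its rows and columns turns it into $M_A^{V'}$ with $V'=\{b_1,\dots,b_k\}\cup(A\setminus\{a_1,\dots,a_k\})$, at the cost of exactly the factor $\sgn(\mathbf{a},\mathbf{b})$ --- this is where the definition of $\sgn(\mathbf{a},\mathbf{b})$ as an inversion count of the two concatenations comes from. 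Talaska's maximal-minor formula then gives $\det(M_{a_ib_j})_{i,j=1}^k=\sgn(\mathbf{a},\mathbf{b})\,F_k(\{a_1,\dots,a_k\},\{b_1,\dots,b_k\})=\tilde F_k(\mathbf{a},\mathbf{b})$, and the theorem follows by applying this once with $k=1$ to identify the matrix entries as $\tilde F_1(a_i,b_j)$ and once with general $k$. This makes both your permutation reduction and the crossing-count induction unnecessary.
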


The proof uses a small generalization of the main theorem of \cite{Talaska}, reproduced here in our language:

\begin{theorem}[see Definition 2.6 and Corollary 4.3 of \cite{Talaska}]\label{talaska}
 Let $G$ be a planar circular network with boundary set $\bd V$, partitioned into the subsets of boundary sources $A$ and boundary sinks $B$. There is a matrix $M$ with rows indexed by $A$ and columns indexed by $\bd V$, called the \emph{boundary measurement matrix}, such that
 \begin{itemize}
  \item The column of $M$ indexed by a boundary source $a\in \bd V$ consists of all $0$'s except for a $1$ in the row indexed by $a\in A$.
  \item If $V'$ is a set of boundary vertices with $|V'|=|A|$, then the maximal minor of $M$ using the columns corresponding to $V'$ and all the rows is equal to
  \[\det\bigl(M_{A}^{V'}\bigr) = F_k(A\setminus V',V'\setminus A),\]
  where $k=|A\setminus V'| = |V'\setminus A|$.
 \end{itemize}
\end{theorem}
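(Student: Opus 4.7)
The plan is to follow Definition~2.6 of \cite{Talaska} to define the boundary measurement matrix $M$, verify the first bullet point by inspection of the definition, and reduce the second bullet point to Corollary~4.3 of \cite{Talaska}. Concretely, set $M_{a,v}:=\delta_{a,v}$ for $v\in A$, and define $M_{a,b}$ for $b\in B$ as Talaska's rational generating function over directed walks from $a$ to $b$ (with signs and multiplicities encoding loop erasure, as in \cite{Talaska}). The first bullet point is then immediate: the column of $M$ indexed by a source $a'\in A$ is by construction the standard basis vector supported at row $a'$.

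For the second bullet, fix $V'\subseteq \bd V$ with $|V'|=|A|$, and split $V'=V'_A\sqcup V'_B$ with $V'_A=V'\cap A$ and $V'_B=V'\cap B$. When $V'_A=\emp$, the identity $\det(M_A^{V'})=F_{|A|}(A,V')$ is precisely the content of Corollary~4.3 of \cite{Talaska}. For the general case, cofactor expansion along the identity-like columns indexed by $V'_A$ forces each such column to be matched with its own row, yielding
\[\det(M_A^{V'})=\varepsilon\cdot\det\bigl(M_{A\setminus V'_A}^{V'_B}\bigr)\]
for a permutation sign $\varepsilon\in\{\pm 1\}$ determined by the interleaving of $V'_A$ and $V'_B$ in the clockwise order on $\bd V$. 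The same sign-reversing involution that underlies Talaska's Corollary~4.3---now run on $k$-tuples of walks starting in $A\setminus V'_A$ rather than all of $A$---identifies this reduced $k\times k$ minor with $F_k(A\setminus V',V'\setminus A)$ up to a sign; combining the two signs gives the claim.

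The main obstacle is the sign bookkeeping: one must verify that $\varepsilon$ from the cofactor step combines cleanly with the sign implicit in Talaska's combinatorial analysis on the reduced index set, so that the unsigned generating function $F_k$ (rather than the signed $\tilde F_k$) emerges on the right-hand side. This is where the ``small generalization'' language in the lead-in becomes literal: Talaska's argument handles the case $V'\subseteq B$ directly, and the present theorem extends it to general $V'$ via the cofactor reduction described above, with the accompanying sign check being the step that demands the most care.
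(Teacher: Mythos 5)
The paper does not actually prove this statement: \cref{talaska} is imported wholesale from Talaska's paper (her Definition 2.6 supplies the matrix $M$, her Corollary 4.3 supplies the minor formula), and the only justification the paper offers is the parenthetical note immediately after the theorem, which records a purely terminological translation. That note is also where your proposal goes off track. You assume Talaska's Corollary 4.3 covers only the case $V'\subseteq B$ and that the general case must be recovered by cofactor expansion along the source columns. But as the note explains, Talaska's result is already stated for an \emph{arbitrary} subset $V'\subseteq \bd V$ with $|V'|=|A|$: in her convention a flow ``connects $A$ to $V'$,'' with the sources in $A\cap V'$ connected to themselves, and such flows are precisely the flows connecting $A\setminus V'$ to $V'\setminus A$ in the present paper's sense. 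So the second bullet is literally her formula after unwinding the word ``connects''; no reduction to the case $V'\cap A=\emp$ is needed.

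If you did insist on your route, the step you explicitly defer---checking that the cofactor sign $\varepsilon$ cancels against whatever sign comes out of rerunning Talaska's cancellation argument on the reduced index set, so that the \emph{unsigned} $F_k$ emerges---is the entire content of the argument, and you give no reason to believe it works out. Worse, the identity you would need for the reduced minor, namely that $\det\bigl(M_{A\setminus V'}^{V'\cap B}\bigr)$ equals $F_k(A\setminus V',V'\setminus A)$ up to a controlled sign, is (after reindexing) exactly the paper's \cref{talaska-corollary}, which the paper \emph{deduces from} \cref{talaska} by the very block-matrix manipulation you propose, run in the opposite direction; invoking it here would be circular. The honest options are either to cite Talaska's Corollary 4.3 in its full generality, as the paper does, or to genuinely reprove her flow/walk involution, which is substantially more than a sign check.
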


(Note: In \cite{Talaska}, every flow $f$ is said to connect all of $A$ to some equal-sized subset $V'$ of $\bd V$; those boundary sources not incident to an edge of $f$ are connected to themselves. The flows that connect $A$ to $V'$ in this sense of Talaska are exactly those that connect $A\setminus V'$ to $V'\setminus A$ in the sense of this paper.)

\begin{corollary}\label{talaska-corollary}
 If $M$ is the boundary measurement matrix for a planar circular network $G$, and $\mathbf{a} = (a_1,\dots,a_k)\in A^k$ and $\mathbf{b}=(b_1,\dots,b_k)\in B^k$ are tuples of boundary sources and sinks, then
 \[\det(M_{a_ib_j})_{i,j=1}^k = \tilde F_k(\mathbf{a},\mathbf{b}).\]
\end{corollary}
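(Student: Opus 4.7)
The plan is to extract $\det(M_{a_i,b_j})_{i,j=1}^k$ from the maximal minor of $M$ to which \cref{talaska} directly applies, tracking the signs produced when rows and columns are rearranged. First, if $\mathbf{a}$ or $\mathbf{b}$ has a repeated entry, then $(M_{a_i,b_j})_{i,j=1}^k$ has two equal rows or columns, so the left-hand side vanishes, while $\tilde F_k(\mathbf{a},\mathbf{b}) = 0$ by definition; so I may assume henceforth that the entries of $\mathbf{a}$ and of $\mathbf{b}$ are all distinct.

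Next, set $V' := (A\setminus\{a_1,\dots,a_k\})\cup\{b_1,\dots,b_k\}\subset \bd V$, a subset of size $|A|$. By \cref{talaska}, $\det(M_A^{V'}) = F_k(\{a_1,\dots,a_k\},\{b_1,\dots,b_k\})$, where rows are indexed by $A$ and columns by $V'$, each in the natural order inherited from $\bd V$. I then reorder the rows of $M_A^{V'}$ to follow the sequence $\mathbf{a}\frown(A\setminus\mathbf{a})$ and the columns to follow the sequence $\mathbf{b}\frown(A\setminus\mathbf{a})$. By \cref{talaska}, each column of $M$ indexed by an element $a'\in A\setminus\mathbf{a}$ is a standard basis vector whose unique nonzero entry lies in the row also indexed by $a'$, so after these reorderings the matrix takes the block-triangular form
\[\begin{pmatrix}(M_{a_i,b_j})_{i,j=1}^k & 0 \\ \ast & I_{|A|-k}\end{pmatrix},\]
whose determinant is $\det(M_{a_i,b_j})_{i,j=1}^k$. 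The row and column permutations have signs $(-1)^{\mathrm{inv}(\mathbf{a}\frown(A\setminus\mathbf{a}))}$ and $(-1)^{\mathrm{inv}(\mathbf{b}\frown(A\setminus\mathbf{a}))}$ respectively, and their product is precisely $\sgn(\mathbf{a},\mathbf{b})$, so
\[\det(M_{a_i,b_j})_{i,j=1}^k = \sgn(\mathbf{a},\mathbf{b})\cdot F_k(\{a_1,\dots,a_k\},\{b_1,\dots,b_k\}) = \tilde F_k(\mathbf{a},\mathbf{b}).\]

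The only delicate point — and the main place where the proof could go astray — is the sign bookkeeping: verifying that the product of the two permutation signs is exactly the $\sgn(\mathbf{a},\mathbf{b})$ defined in the paper. This matches cleanly precisely because $\sgn(\mathbf{a},\mathbf{b})$ is defined using concatenations in which $A\setminus\mathbf{a}$ appears in its natural order, which is the same natural order used on the corresponding rows of $M_A^{V'}$ and on the corresponding columns of $V'$.
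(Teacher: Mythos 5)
Your proposal is correct and is essentially the paper's own argument: both proofs identify $\det(M_{a_ib_j})_{i,j=1}^k$ with $\det(M_A^{V'})$ for $V'=(A\setminus\{a_1,\dots,a_k\})\cup\{b_1,\dots,b_k\}$ via the same block-triangular structure coming from the identity columns on $A\setminus\mathbf{a}$, with the permutation sign matching $\sgn(\mathbf{a},\mathbf{b})$ by its definition, and then invoke \cref{talaska}. The only difference is the direction of narration (you permute $M_A^{V'}$ into block form; the paper starts from the block form and sorts), which is immaterial.
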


\begin{proof}
 If there are any repeated vertices among the entries of $\mathbf{a}$ or $\mathbf{b}$, then both sides of the equation are zero, so in the following, assume that the entries of $\mathbf{a}$ and $\mathbf{b}$ are all distinct.
 
 Consider the submatrix of $M$ whose rows are indexed by $\mathbf{a} \frown (A\setminus\mathbf{a})$ and whose columns are indexed by $\mathbf{b}\frown (A\setminus\mathbf{a})$; it has the following block structure:
 \[\bordermatrix{ & \mathbf{b} & A\setminus\mathbf{a} \cr
 \mathbf{a} & (M_{a_ib_j})_{i,j=1}^k & 0 \cr
 A\setminus\mathbf{a} & \ast & I}\]
 On the one hand, as a block matrix its determinant equals $\det(M_{a_ib_j})_{i,j=1}^k$.
 On the other hand, sorting its rows and columns (which changes the sign of its determinant by a factor of $\sgn(\mathbf{a},\mathbf{b})$) yields exactly the submatrix $M_A^{V'}$ of $M$, where $V' = \{b_1,\dots,b_k\} \cup (A \setminus \{a_1,\dots,a_k\})$. 
 Therefore 
 \begin{align*}
  \det(M_{a_ib_j})_{i,j=1}^k &= \sgn(\mathbf{a},\mathbf{b})\det(M_A^{V'})\\
  &= \sgn(\mathbf{a},\mathbf{b}) F_k(A\setminus V', V'\setminus A)\\
  &= \sgn(\mathbf{a},\mathbf{b}) F_k(\{a_1,\dots,a_k\}, \{b_1,\dots,b_k\})\\
  &= \tilde F_k(\mathbf{a},\mathbf{b}).\qedhere
 \end{align*}
\end{proof}

Now we can prove that the $\tilde F_k$ satisfy the determinant relation:

\begin{proof}[Proof of \cref{flow-determinant}]
  Let $M$ be the boundary measurement matrix of $G$. Given $a\in A$ and $b\in B$, we can apply \cref{talaska-corollary} to the $1$-tuples $(a)$  and $(b)$ to deduce that $M_{ab}=\tilde F_1(a,b)$. Applying \cref{talaska-corollary} again to the tuples $\mathbf{a}$ and $\mathbf{b}$ thus yields \[\det(\tilde F_1(a_i,b_j))_{i,j=1}^k = \det(M_{a_ib_j})_{i,j=1}^k = \tilde F_k(\mathbf{a},\mathbf{b})\]
 as desired.
\end{proof}

Then we can immediately apply the determinant-to-Pfaffian principle to obtain

\begin{corollary}\label{flow-pfaffian}
 Let $G$ be a planar circular network with its sets $A$ and $B$ of boundary sources and sinks. Given a tuple $\mathbf{a}=(a_1,\dots,a_k)\in A^k$, define
 \[\tilde E_k(\mathbf{a}) = \sum_{\substack{\mathbf{b}=(b_1,\dots,b_{k})\in B^{k}:\\b_1<b_2<\dots<b_{k}}}\tilde F_{k}(\mathbf{a},\mathbf{b}).\]
 Then for even $k$, we have $\tilde E_k(\mathbf{a}) = \Pf\bigl(\tilde E_2(a_i,a_j)\bigr)_{i,j=1}^k$.
\end{corollary}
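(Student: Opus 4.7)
The plan is to apply the determinant-to-Pfaffian principle (\cref{main-thm}) directly, taking the family $\{\tilde C_k\}_{k\in\N}$ there to be the family $\{\tilde F_k\}_{k\in\N}$ here. All hypotheses of \cref{main-thm} are already in place: the graph $G$ is finite, so $B$ is finite; the functions $\tilde F_k$ take values in the commutative ring $R=\Q(e:e\in\Ed(G))$; and the determinant relation
\[\tilde F_k(\mathbf{a},\mathbf{b}) = \det\bigl(\tilde F_1(a_i,b_j)\bigr)_{i,j=1}^k\]
is precisely the content of \cref{flow-determinant}.

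First I would observe that the ordering of $B$ used in the definition of $\tilde E_k$ is the restriction to $B$ of the clockwise cyclic order on $\bd V$ fixed at the start of \cref{flows}; this plays the role of the ordering of $B$ required by \cref{main-thm}. With that ordering chosen, the function $\tilde R_k$ produced from $\tilde C_k = \tilde F_k$ by the recipe in \cref{main-thm} is
\[\tilde R_k(\mathbf{a}) = \sum_{\substack{\mathbf{b}=(b_1,\dots,b_k)\in B^k:\\ b_1<b_2<\cdots<b_k}}\tilde F_k(\mathbf{a},\mathbf{b}),\]
which is literally the definition of $\tilde E_k(\mathbf{a})$ in the statement of the corollary.

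Therefore, for any even $k$ and any $\mathbf{a}=(a_1,\dots,a_k)\in A^k$, \cref{main-thm} yields
\[\tilde E_k(\mathbf{a}) = \tilde R_k(\mathbf{a}) = \Pf\bigl(\tilde R_2(a_i,a_j)\bigr)_{i,j=1}^k = \Pf\bigl(\tilde E_2(a_i,a_j)\bigr)_{i,j=1}^k,\]
which is the desired conclusion. There is no real obstacle: the entire content of the corollary is the verification that \cref{flow-determinant} provides exactly the input needed for \cref{main-thm}, so the proof reduces to checking that the definitions match up and invoking the principle.
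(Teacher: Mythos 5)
Your proposal is correct and matches the paper's proof exactly: the paper also obtains \cref{flow-pfaffian} by applying the determinant-to-Pfaffian principle (\cref{main-thm}) with $\tilde C_k=\tilde F_k$, using \cref{flow-determinant} to supply the determinant relation. Your additional verification that the ordering on $B$ and the definition of $\tilde E_k$ line up with the recipe in \cref{main-thm} is a faithful spelling-out of what the paper leaves implicit.
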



\bibliographystyle{acm}
\bibliography{reflist}

\end{document}